    \newcommand\emailampersat{{\color{red}\small@}}
\theoremstyle{plain}
\newtheorem{thm}{Theorem}[section]
\newtheorem{lemma}[thm]{Lemma}
\newtheorem{cor}[thm]{Corollary}
\newtheorem{remark}[thm]{Remark}
\newtheorem{example}[thm]{Example}
\theoremstyle{definition}
\newcommand{\be}{\begin{equation}} 
\newcommand{\ee}{\end{equation}}
\newcommand{\sa}{\Sigma}
\newcommand{\M}{\mathbf{Meas}}
\newcommand{\Set}{\mathbf{Set}}
\newcommand{\Cvx}{\mathbf{Cvx}}
\newcommand{\C}{\mathcal{C}}
\newcommand{\K}{\mathcal{K}}
\newcommand{\E}{\downarrow}
\newcommand{\leb}{\mathcal{L}}
\newcommand{\two}{\mathbf{2}}
\newcommand{\one}{\mathbf{1}}
\newcommand{\G}{\mathcal{G}}
\newcommand{\T}{\mathcal{P}}
\newcommand{\I}{\mathbb{I}}
\newcommand{\mbfI}{\mathbf{I}}
\newcommand{\gen}[1]{\langle \langle #1 \rangle \rangle}
\begin{document}
\title[Giry algebras and Convex Spaces]{The equivalence between the categories of Giry algebras and convex spaces}
\author{Kirk Sturtz}
\address{Kirk Sturtz, Nong Saeng, Thailand.}
\email{kirksturtz@universalmath.com}

 \subjclass[2010]{18C20,60A99}
 \keywords{Convex spaces, Giry algebras, Giry monad, adequate subcategories, Isbell conjugacy}

\begin{abstract}    A duality between the category of convex spaces and measurable spaces arises from the existence of the unit interval, which is an object in both these categories.  The full subcategory of the category of convex spaces, consisting of just the single object, the unit interval,  is both a dense and codense subcategory in the category of convex spaces.  Combined with the the symmetric monoidal closed category structure of the category, one obtains the double dualization monad into the unit interval, which sends a point to the evaluation map at that point.  The restriction of the codomain of the unit of this monad  to the weakly averaging affine functionals is an isomorphism.  Moreover, every convex space has an associated  measurable space,  whose  $\sigma$-algebra is generated by the Boolean subobjects of that convex space.  The resulting $\sigma$-algebra of that measurable space makes it a separated measurable space. These properties are used to give a proof that the category of Giry algebras is equivalent to the category of convex spaces.   
 \end{abstract}
  \maketitle
  
  \vspace{.1in}
  
Editorial Note: This paper has been updated and replaced with the article \emph{The factorization of the Giry monad and convex spaces as an extension of the Kleisi category}.  The approach used in this paper was originally  based upon using the \emph{density} of the unit interval in $\Cvx$, whereas the updated perspective uses the codensity of the unit interval in $\Cvx$.  The latter seems more natural and intuitive, particularly since the use of the full subcategory of separated measurable spaces provides motivation for the various constructions.  Moreover, since the analysis of the close relationship between the tensor products in the two categories, $\M$ and $\Cvx$, are developed (and of considerable interest in their own right), we chose to develop the theory from that perspective rather than correct and update a few annoying errors in this presentation. (The main error at issue is pointed out subsequently).  Nevertheless, the perspective of paths in convex spaces, provides a nice tool for analyzing several aspects associated with the theory.      
 
 \vspace{.1in}
 
\section{Introduction}  
The Giry monad $\G$ on the category of measurable spaces $\M$, defined for every space $X$ as the space of probability measures on $X$, has a 
natural convex structure associated with it, and conversely, the category of convex spaces, $\Cvx$, has for each object $A$ a natural  $\sigma$-algebra structure generated by the Boolean subobjects of $A$.

By using the symmetric monoidal closed category (SMCC) structure of both of these categories,  we show these two natural structures determine an adjoint pair of functors which factorize the Giry monad, and prove the category $\Cvx$ is equivalent to the category of Giry algebras, $\M^{\G}$ .\footnote{The Giry monad is named after Giry\cite{Giry}, however the original construction  follows from Lawvere's original paper in 1963, before ``monads'' were defined and their relationship to adjunctions were clarified.}  

In the literature the Giry monad is almost always accompanied by characterizations using the simplicial category 
$\Delta$ and the functor
\be \nonumber
\Delta \rightarrow \mathbf{Top}
\ee
mapping a finite ordinal $\mathbf{n}$ to the standard $n$-dimensional affine simplex $\Delta_n$, viewed as a subspace of  the Euclidean space $\mathbb{R}^{n+1}$.    While such explanations provide a useful perspective about the Giry monad, the assumption of an underlying topological structure is unnecessary and conceals  the underlying connection between convex and measurable spaces  necessary to prove $\Cvx$ is equivalent to $\M^{\G}$.   Approaches other than the simplicial method can be found in \cite{Doberkat, Keimel, Swirszcz}.
 
The category  $\Cvx$  has three basic properties,  
 (i) it is a symmetric monoidal closed category (SMCC) under the tensor product,  (ii) it is complete and cocomplete, and (iii) the full subcategory $\I \hookrightarrow \Cvx$ consisting of the unit interval $\mbfI = [0,1]$ is dense in $\Cvx$. The proof of the first two facts are both straightforward verifications. Details can be found in Meng\cite{Meng}. 

The category $\M$ is also complete, cocomplete, and a SMCC under the tensor product.  The monoidal structure, $(\M, \otimes, \mathbf{1})$, endows a pair of measurable spaces with the $\sigma$-algebra generated by all the constant graph maps, $\{ X \stackrel{\Gamma_y}{\longrightarrow} X \times Y \}_{x \in X}$ 
and $\{ Y \stackrel{\Gamma_x}{\longrightarrow} X \times Y \}_{y \in Y}$. This tensor product structure on the cartesian product is denoted $X \otimes Y$, and contains the product $\sigma$-algebra as a sub $\sigma$-algebra.\footnote{Further details on the tensor product structure on $\M$ making $\M$ a SMCC structure can be found in \cite{Sturtz}. The results in this paper were essentially given in \cite{Sturtz}, except we were now aware that  the category $\I$ was is dense and codense in $\Cvx$.  With that knowledge in hand, the assumption that each affine functional $\mbfI^A \stackrel{P}{\longrightarrow} \mbfI$ preserves limits of sequences of simple functions can be dropped, since the adequacy of $\I$ implies that assumption.

The full implications of this SMCC structure have yet to be worked out, and one important aspect of this SMCC structure, relating to probability theory, is that while  the Giry monad is a commutative monad with respect to the standard product structure,  with respect to the tensor product structure on $\M$ (which is necessary if one wants to work with function spaces), the question of the commutativity/noncommutativity of the Giry monad is unknown, although we conjecture it is noncommutative. (The commutativity of a monad is defined with respect to the monoidal structure.\cite{Kock})  While the calculus of extensive quantities (probability measures are extensive quantities) has been studied by A. Kock\cite{Kock2}, the applicability to $\M$ with the SMCC structure remains an open question.}
This \emph{tensor} product structure, rather than the standard product structure generated by the two coordinage projection mappings, is necessary to obtain the SMCC structure making the evaluation maps measurable functions.  

An overview of the problem, showing $\Cvx$ is equivalent to $\M^{\G}$,  and an outline of the paper is as follows.
To  prove the Eilenberg-Moore category of $\G$-algebras is equivalent to the category of convex spaces,  we factor the Giry monad into two functors with the functor $\T$ at component $X \in_{ob} \M$
\begin{figure}[H]
\begin{equation}   \nonumber
 \begin{tikzpicture}[baseline=(current bounding box.center)]
         \node   (MG)  at   (0, 2)  {$\M^{\G}$};
         \node  (C)  at (0,0)    {$\M$};
         \node  (C2) at  (4,0)   {$\Cvx$};
         \node  (c)   at    (6.5, 0)   {$\T \dashv \mathbf{\Sigma}$};
         
         \draw[->, left] ([xshift=-2pt] C.north) to node {$\mathcal{F}^{\G}$} ([xshift=-2pt] MG.south);
         \draw[->, right] ([xshift=2pt] MG.south) to node {$\mathcal{U}^{\G}$} ([xshift=2pt] C.north);
        \draw[->,above] ([yshift=2pt] C.east) to node {$\T$} ([yshift=2pt] C2.west);
        \draw[->,below,dashed] ([yshift=-2pt] C2.west) to node {$\mathbf{\Sigma}$} ([yshift=-2pt] C.east);
        \draw[->, above] (C2) to node {$\Phi$} (MG);
	 \end{tikzpicture}
 \end{equation}
 \caption{The Eilenberg-Moore adjunction, \mbox{$\mathcal{F}^{\G} \dashv \mathcal{U}^{\G}$}, and the proposed adjunction to $\Cvx$, $\T \dashv \Sigma$.}
 \label{comparisonFunctor}
 \end{figure}
 \noindent
being given as $\G(X)$ viewed simply as a convex space, having no $\sigma$-algebra associated with it.    In this diagram we have employed the standard notation, with $\Phi$ being the comparison functor, and $\T \dashv \Sigma$ the (desired) adjunction such that the comparison functor is an equivalence of categories.

 The functor $\T$ preserves colimits\footnote{The fact $\T$ preserves colimits is an elementary verification; the coproduct of $X$ and $Y$  in $\M$ is given by the disjoint union, $X+Y$, with the final $\sigma$-algebra on $X \cup Y$ such that the two inclusion maps $X \hookrightarrow X+Y$ and $Y \hookrightarrow X + Y$ are measurable.}, and in particular,\footnote{The symbol $\two$ is overloaded, as we use it to represent both a discrete measurable space, and a discrete convex space.  The distinction should be clear from the context.  Similiarly with regards to the unit interval $\mbfI$, and the object $\one$ which is the terminal object in both $\M$ and $\Cvx$. (Both can be viewed as measurable spaces with a natural convex structure.) } 
\be \nonumber
\T(\two) = \T(\one + \one) = \T(\one) + \T(\one) =\one+\one =  [0,1]
\ee
where the coproduct $\one +\one$ in $\Cvx$ is the unit interval, $\mbfI = [0,1]$, which has the  convex structure given by the free functor on $2$ generators.\footnote{In $\Cvx$, a coproduct of any two spaces $A$ and $B$ takes the set coproduct of $A$ and $B$, and takes the free convex space generated by those elements and then uses a equivalence relation on that set such that $
\alpha (a_1,1) + (1-\alpha) (a_2,1)  \cong (\alpha a_1  + (1-\alpha) a_2,1)$
and similarly for elements of $B$.
Thus the coproduct
$ A + B = \{ \displaystyle{ \sum_{i=1}^n} (1-\alpha_i) (a_i, 1) + \alpha_i (b_i,2) \, | \, a_i \in A, \, b_i \in B, \, \forall \alpha \in [0,1],  \, n \in \mathbb{N} \}$
The fact that $\Cvx$ is complete and cocomplete is well known.  In fact $\Cvx$ has a coseparator $\mathbb{R}_{\infty}$, as well as a separator (the one point space).\cite{Borger}}  
We identify  $\mbfI$ with $\T(\two)$, as convex spaces, using the isomorhism $\alpha \mapsto (1-\alpha) \delta_0 + \alpha \delta_1$,   where $\delta_0$ and $\delta_1$ are the dirac measures on the discrete measurable space $\two$.   

To prove the equivalence between $\M^{\G}$ and $\Cvx$, amounts to showing that there exists a right adjoint to the functor $\T$.  We can characterize this right adjoint $\Sigma$ as being defined by taking as the generating set for the $\sigma$-algebra on any convex space $A$, the set $\Cvx(A, \two)$.  Every element in the function space $\Cvx(A, \two)$  determines a \emph{Boolean subobject} pair of $A$ and is the topic of \S \ref{BooleanSubobjects}.  The set of all Boolean subobjects of a convex space generate the $\sigma$-algebra for $\Sigma A$, and the space $\Sigma A$ is a \emph{separated} measurable space, which is shown in \S \ref{separated} using the fact that $\Cvx$ has a coseparator.  The density and codensity of $\I$ are shown in \S \ref{density} and \S \ref{codensity}, respectively.  The codensity of $\I$ implies that the image of the double dualization map $A \rightarrow \Cvx( \Cvx(A, \mbfI), \mbfI)$ into $\mbfI$, sending $a \mapsto ev_a$, is an isomorphism, with the image space being the space of all weakly averaging affine functionals. This isomorphism, arising from the double dualization map into $\mbfI$ with the restricted codomain of all weakly averaging affine functionals, is, in many respects, the key point in proving the equivalence between $\Cvx$ and $\M^{\G}$.  
 \S \ref{altView} provides an alternative view of the codensity of $\I$ in $\Cvx$, while \S \ref{functions} provides a bridge from viewing function spaces, both $\mbfI^A$ or $\two^A$,  as convex spaces to viewing them as measurable spaces.  Both \S \ref{altView} and \S \ref{functions}  are provided to lend further understanding to duality and function spaces, respectively. They are not necessary to prove the main results, and may be skipped, referring back to them for clarification as needed.
  In \S \ref{adjunction} the two functors $\T$ and $\Sigma$ are shown to form an adjunct pair whose composite is the Giry monad, and then in \S \ref{equiv} the equivalence between $\M^{G}$ and $\C$ is given by explicitly showing the equivalence given the adjunction $\T \dashv \Sigma$.\footnote{In practice, given an adjunct pair it is often easier to show the equivalence directly rather than use the necessary and sufficient conditions.}

Throughout the paper, the ``\textbf{remark}'' paragraphs are intended as commentary and motivation for the material, and the technical  remarks  are not used in the subsequent development of the theory.  The extended remark at the end of \S \ref{SMCCMeas} provides the ``big picture'' how this research fits into the grand scheme of probability theory as (essentially) the theory of convex spaces.\footnote{The equivalence of the two categories, $\M^{\G}$ and $\Cvx$, implies every Giry algebra $(X, h)$ corresponds  to a convex space.  Hence the study of Giry algebras is the study of convex spaces.  The fact that $\Cvx$ is a complete, cocomplete, SMCC (a cosmos) provides the necessary framework for a vastly richer theory of probability than the current perspective.}

\section{Convex Spaces}

\subsection{Convex Space Structures}
 In defining convex spaces, we employ  the definitions given in B\"{o}rger and Kemp\cite{Borger}, so as to introduce the related categories of positively convex spaces, $\mathbf{PC}$, and superconvex spaces, $\mathbf{SC}$.\footnote{Our definitions are essentially verbatim from B\"{o}rger and Kemp, with the exception that we restrict the coefficients $\alpha_i$, as as used in the definition of an $\Omega$-algebra, to lie in the unit interval (rather than $\mathbb{R}$ or $\mathbb{C}$) and consequently do not require the modulus of these quantities, $|\alpha_i|$.  Although we note that the use of $\mathbb{C}$, or rather the (complex) unit disk, $\mathbb{D}$, yields an easy generalization of the theory that seems relevant to generalizing ``classical probability theory'' (which uses just the unit interval) to ``generalized (or quantum) probability theory''. 
  In this regard, the $2^{nd}$ footnote  suggest that at least some aspects of quantum probability theory may be be derived using Giry algebras (over $\mathbb{C}$ or $\mathbb{D})$, and (looking ahead) consequently probability measures  within that framework are weakly-averaging affine maps $\mathbb{D}^A \rightarrow \mathbb{D}$. (This course of thought requires that $\mathbb{D}$ be dense and codense in $\Cvx$, which we have not verified.)}  The alternative approach to defining convex spaces is that presented in Meng\cite{Meng}, which views $\Cvx$ as a single sorted theory.  For many purposes, such as showing completeness or that $\Cvx$ is a regular category, the definition as an algebraic theory is more useful.

 For $\Omega= \{  \mathbf{\alpha} \in \mbfI^{\mathbb{N}} \, | \, \sum_{i \in \mathbb{N}} \alpha_i \le 1\}$, 
 an $\Omega$-algebra is a set $A$ together with a map 
 \be \nonumber
 \begin{array}{ccc}
 \Omega &\rightarrow& \Set(A^{\mathbb{N}}, A) \\
 \mathbf{\alpha} & \mapsto&  A^{\mathbb{N}} \stackrel{\mathbf{\alpha_A}}{\longrightarrow} A
 \end{array}
 \ee
where $\mathbf{\alpha}_A$ is a set function.
A morphism of $\Omega$-algebras from $A$ to $B$ is a set map $A \stackrel{m}{\longrightarrow} B$ such that the diagram 
 \begin{equation}   \nonumber
 \begin{tikzpicture}[baseline=(current bounding box.center)]

          \node  (An) at  (0,0)  {$A^{\mathbb{N}}$};
          \node   (A)  at  (3,0)   {$A$};
         \node  (Bn)  at (0, -2)     {$B^{\mathbb{N}}$};
         \node   (B)   at   (3, -2)    {$B$};
         
         \draw[->,above] (An) to node {$\mathbf{\alpha}_A$} (A);
         \draw[->,left] (An) to node {$m^{\mathbb{N}}$} (Bn);
         \draw[->,right] (A) to node {$m$} (B);
         \draw[->,below] (Bn) to node {$\mathbb{\beta}_B$} (B);
                  
	 \end{tikzpicture}
 \end{equation}
commutes.  Obviously, the $\Omega$-algebras form a category, the composition of the morphisms being the set-theoretic composition.
 
 Let $\mathbf{a} \in A^{\mathbb{N}}$ denote a sequence in $A$.  Then for $\alpha \in \Omega$ and $A$ an $\Omega$-algebra with the mapping $\mathbf{\alpha}_A$ let $\mathbf{\alpha}_A(\mathbf{a}) = \sum_{i \in \mathbb{N}} \alpha_i a_i$ denote the value of the map $\alpha_A$ at $\mathbf{a}$.   
 An $\Omega$-algebra $A$ is called a positively convex space when $A \ne \emptyset$ and the following two axioms are satisfied.   
  \begin{enumerate}
 \item For $e_i^j \in \Omega$  the ``unit vector'',  
 \be \nonumber
 e_i^j = \left\{ \begin{array}{ll} 1&  \textrm{ iff }  i=j \\
 0 & \textrm{otherwise }
 \end{array} \right..
 \ee
 the following condition holds:
 \be \nonumber
 \displaystyle{ \sum_{i \in \mathbb{N}}} e_i^j a_i = a_j \quad \textrm{ for all }j \in \mathbb{N}, \, \textrm{ and all }\mathbf{a} \in A^{\mathbb{N}}.
 \ee
  
 \item  $\sum_{i \in \mathbb{N}} \alpha_i \bigg( \sum_{j \in \mathbb{N}} \beta_j^i a_j\bigg) = \sum_{j \in \mathbb{N}} (\sum_{i \in \mathbb{N}} \alpha_i \beta_j^i) a_j$, for all $\mathbf{\alpha} , \mathbf{\beta}^i \in \Omega$, and all $\mathbf{a} \in A^{\mathbb{N}}$.
 \end{enumerate}

The category of positively convex spaces, $\mathbf{PC}$, which is nonempty by hypothesis, has a nullary operator for each object $A$ in the category, called the zero (nullary) map,
\be \nonumber
1 \stackrel{0}{\longrightarrow} A
\ee
which is the image of the zero sequence $\mathbf{0} \in \Omega$, i.e., $A^{\mathbb{N}} \stackrel{\mathbf{0}}{\longrightarrow} A$  is a constant map determining  a unique element $0_A$.

A \emph{superconvex space} drops the hypothesis of being nonempty, but requires the limit of the countably infinite sum to have value $1$.  A \emph{convex space} is an $\Omega$-algebra restricted to sequences $\alpha \in \mbfI^{\mathbb{N}}$ such that only finitely many terms are nonzero, and the sum of those terms is one.  Hence we obtain the usual condition $\sum_{i=1}^n \alpha_i = 1$ associated with a ``convex sum'' $\sum_{i=1}^n \alpha_i a_i$.     
Note the category $\Cvx$ contains the object $\emptyset$  which is necessary for $\Cvx$ to be complete (and cocomplete). 

 The unit interval $\mbfI$ can be viewed as an object in $\mathbf{PC}$, with the zero element being $0$.  Given any countable sequence $\{\alpha_i\}_{i=1}^{\infty}$ of terms in $\mbfI$ with $\lim_{n \rightarrow \infty} \{\sum_{i=1}^n \alpha_i \} \le 1$, we have upon taking any another countable sequence $\{p_i\}_{i=1}^{\infty}$ of element of $\mbfI$, the formal (infinite convex) sum
 \be \nonumber
 \sum_{i \in \mathbb{N}} \alpha_i p_i
 \ee 
where the elements $p_i$ are viewed as variables (or ``symbols'') of $\mbfI$ with the $\alpha_i$ being the  coefficients of those variables.  The elements $p_i$ need not be unique\footnote{These terms $p_i$ can be  interpretted  as values $p_i = P(m^{-1}(U_i))$ where $P$ is some probability measure on a convex space $A$, endowed with a $\sigma$-algebra, $\Sigma A$, and $\Sigma A \stackrel{m}{\longrightarrow} \mbfI$ is a measurable map. The $\{U_i\}_{i=1}^{\infty}$ partition the unit interval, with each $U_i$ itself a subinterval of $\mbfI$.}, and the limit of the infinite convex sum, upon evaluation (componentwise multiplication, $\alpha_i \cdot p_i$, and taking the limit of the sum), clearly gives a quantity in $\mbfI$.

The category of superconvex spaces, $\mathbf{SC}$, satisfies the following property, the proof of which can be found in \cite{Borger}.

\begin{thm} \label{superconvex} For $A$ a superconvex space and $a_0 \in A$, there exists a unique positively convex space  structure on $A$ with zero element $a_0$, such that the restriction of the operations to convex sums whose limit is one gives the original superconvex space.
\end{thm}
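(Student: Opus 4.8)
The plan is to construct the positively convex structure by padding a deficient total mass with the chosen zero $a_0$, and then to extract uniqueness from the associativity axiom (2). The only prerequisite beyond the two $\mathbf{SC}$ axioms is the standard fact (available in \cite{Borger}) that a superconvex sum is insensitive to permuting its index set and to inserting or deleting a term whose coefficient is $0$; I would isolate that first, since in the $\Omega$-algebra formalism operations are a priori sensitive to the ordering of the coefficient sequence.

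\textbf{Existence.} Given $\alpha \in \Omega$ with $s := \sum_{i\in\mathbb N}\alpha_i \le 1$ and $\mathbf a \in A^{\mathbb N}$, form the shifted data $b_1 := a_0$, $b_{n+1} := a_n$ and $\gamma_1 := 1-s$, $\gamma_{n+1} := \alpha_n$, so that $\gamma \in \Omega$ has total mass $1$, and \emph{define}
\[
  \alpha_A(\mathbf a)\ :=\ \gamma_B(\mathbf b),
\]
the right-hand side being the given superconvex operation. I would then check the two $\mathbf{PC}$ axioms. For axiom (1): if $\alpha = e^j$ (the unit vector, $e^j_i = 1$ iff $i=j$) then $s = 1$, $\gamma_1 = 0$, $\gamma_{j+1} = 1$, and the superconvex unit-vector axiom gives $\gamma_B(\mathbf b) = b_{j+1} = a_j$; the same with $\alpha = \mathbf 0$ gives $\mathbf 0_A(\mathbf a) = b_1 = a_0$, so the nullary operation is $a_0$. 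Axiom (2) is the computational core: expanding the left side of $\sum_i\alpha_i(\sum_j\beta^i_j a_j) = \sum_j(\sum_i\alpha_i\beta^i_j)a_j$ through the definition — the inner sums become padded superconvex combinations $\gamma^{(i)}_B(\mathbf b)$, the outer sum then another padded superconvex combination — one application of the superconvex associativity axiom collapses everything to the superconvex combination of $\mathbf b$ carrying weight $m_j := \sum_i\alpha_i\beta^i_j$ on $a_j$ and weight $1 - \sum_j m_j$ on $a_0$, which is \emph{by definition} the right side; so the $\mathbf{PC}$-associativity identity reduces verbatim to $\mathbf{SC}$-associativity. Finally, when $\sum_i\alpha_i = 1$ the padding weight $\gamma_1$ is $0$, and deleting that zero term (the insertion/deletion fact) identifies $\gamma_B(\mathbf b)$ with the original superconvex value; hence the new operations restrict to the given ones.

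\textbf{Uniqueness.} Let $\{\alpha'_A\}$ be any $\mathbf{PC}$ structure on $A$ with zero element $a_0$ whose restriction to sequences of total mass $1$ is the given superconvex structure. Fix $\alpha$ with $s = \sum_i\alpha_i \le 1$ and $\mathbf a\in A^{\mathbb N}$, and form $\gamma,\mathbf b$ as above. Since $a_0 = \mathbf 0_A(\mathbf a)$ is the all-zero nullary operation of the $\mathbf{PC}$ structure and $a_n = e^n_A(\mathbf a)$ by axiom (1), the $\mathbf{PC}$-associativity axiom applied to $\{\alpha'_A\}$ with outer weights $\gamma$ and inner weights $\mathbf 0$ on the $b_1 = a_0$ slot and $e^n$ on the $b_{n+1} = a_n$ slots yields
\[
  \gamma_B(\mathbf b)\ =\ \sum_{n\in\mathbb N}\alpha_n a_n\ =\ \alpha'_A(\mathbf a).
\]
But $\gamma$ has total mass $1$, so the left-hand side is determined by the given superconvex structure; hence $\alpha'_A(\mathbf a)$ is forced, and $\{\alpha'_A\}$ coincides with the structure built above. (Conversely, this computation is exactly what motivates the defining formula.)

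\textbf{Main obstacle.} Conceptually nothing is deep here; the real work is bookkeeping — carrying out the ``shift by one'' reindexing cleanly so that the verification of axiom (2) does not become circular, and pinning down at the outset the permutation- and zero-insertion-invariance of superconvex sums, which is the one place the $\Omega$-algebra formalism genuinely demands care.
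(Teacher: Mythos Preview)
The paper does not supply its own proof of this statement; it simply cites \cite{Borger}. Your argument is the standard one and is correct in substance: pad a deficient sum with weight $1-s$ on $a_0$, reduce the $\mathbf{PC}$ axioms to the $\mathbf{SC}$ axioms via this padding, and derive uniqueness by running the same associativity in reverse. This is almost certainly the argument in B\"orger--Kemper as well, so there is nothing to contrast.

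Two small points worth tightening. First, you write $\gamma_B(\mathbf b)$ throughout, but there is no space $B$; all sequences live in $A^{\mathbb N}$, so this should read $\gamma_A(\mathbf b)$ with the given superconvex operation. Second, the ``zero-insertion/permutation invariance'' you invoke as a prerequisite is not an extra axiom but a direct consequence of (1) and (2): for instance, to identify $\gamma_A(\mathbf b)$ with the original $\alpha_A(\mathbf a)$ when $\gamma_1 = 0$, write each $a_n = \sum_j e^{n+1}_j b_j$ by axiom (1) and apply axiom (2) to $\sum_n \alpha_n a_n$; the resulting coefficients on $\mathbf b$ are exactly $\gamma$. You flag this correctly as the place where care is needed, but it is worth saying explicitly that no outside input is required --- the two axioms already deliver it.
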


This result is useful for viewing a superconvex space as a positively convex space.  From the perspective of calculating limits of a sequence of real values in $\mbfI$, such as $\lim_{n \rightarrow \infty} \{\sum_{i=1}^n P(U_i)\}$, this category is what is used in practice.  Thus, while our main results focus on $\Cvx$, the connection between the various types of convex structures is relevant to computations and understanding, for example, how the representation of measurable functions via sequences of simple measurable functions fits into the big picture.

For subsequent reference, we note the following result.  Let \mbox{$\mathbb{R} = (-\infty, \infty)$} with the natural convex structure.  This convex structure extends to $\mathbb{R}_{\infty} = (-\infty,\infty]$, by defining for all $r \in \mathbb{R}$,  the convex sum 
\be \nonumber
\alpha \infty + (1-\alpha) r \stackrel{def}{=} \left\{ \begin{array}{ll} \infty & \textrm{for all }\alpha \in (0,1] \\ r & \textrm{for }\alpha=0 \end{array} \right.
\ee
 
\begin{thm}
The object  $\mathbb{R}_{\infty}$ is a coseparator in $\Cvx$.
\end{thm}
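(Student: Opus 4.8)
The plan is to verify, for $\mathbb{R}_{\infty}$, the form of the coseparator condition that is natural for a variety of algebras: since $\Cvx$ is such a variety (a single‑sorted theory, as recalled above) and its free algebra on one generator is the terminal object $\one$, two parallel morphisms of convex spaces agree iff they agree on underlying elements, and the elements of a convex space $A$ are exactly the morphisms $\one\to A$. Hence it suffices to show that $\mathbb{R}_{\infty}$ \emph{separates the points} of every convex space, i.e.\ that for a convex space $A$ and distinct elements $a\neq a'$ there is an affine map $h\colon A\to\mathbb{R}_{\infty}$ with $h(a)\neq h(a')$.

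First I would localize to the sub‑convex‑space $S=\langle a,a'\rangle\subseteq A$. Since $\mbfI=\one+\one$ is the free convex space on two generators, $S$ is the image of the unique affine map $q\colon\mbfI\to A$ with $q(t)=(1-t)a+ta'$, so $S$ is a quotient of $\mbfI$. An elementary analysis of the congruences on $\mbfI$ — whose key point is that if $q(t_0)=a'$ for even a single interior $t_0\in(0,1)$ then necessarily $q(s)=a'$ for all $s\in(0,1]$, and symmetrically for $a$, which pins down the admissible fibres of $q$ — shows that such a quotient is, up to isomorphism, one of $\one$, the unique two‑element convex space, the three‑element convex space $\{0,*,1\}$ in which $*=\tfrac12\cdot 0+\tfrac12\cdot 1$ is absorbing, or $\mbfI$ itself. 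Since $a\neq a'$ we are not in the first case, and in each of the other three at least one of the singletons $\{a\}$, $\{a'\}$ is a face of $S$. Taking $F$ to be such a face, the map $S\to\mathbb{R}_{\infty}$ that is $0$ on $F$ and $\infty$ on the (necessarily absorbing) complement $S\setminus F$ is affine — this is exactly where the convex sum $\alpha\infty+(1-\alpha)r=\infty$ of $\mathbb{R}_{\infty}$ is used — and it separates $a$ from $a'$; call this map $g\colon S\to\mathbb{R}_{\infty}$.

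It remains to extend $g$ along the inclusion $S\hookrightarrow A$, and for this I would invoke (or establish) the injectivity of $\mathbb{R}_{\infty}$ in $\Cvx$ relative to sub‑convex‑space inclusions, i.e.\ a Hahn--Banach‑style extension theorem for $\mathbb{R}_{\infty}$‑valued affine maps: by Zorn's lemma one reduces to extending one generator at a time, from a sub‑convex‑space $B$ to $\{\lambda x+(1-\lambda)b : b\in B,\ \lambda\in[0,1]\}$ for a single new element $x$, and the content is to check that the set of values in $\mathbb{R}_{\infty}$ admissible as the image of $x$ — the intersection of the subintervals of $\mathbb{R}_{\infty}$ forced by the relations $\lambda_1x+(1-\lambda_1)b_1=\lambda_2x+(1-\lambda_2)b_2$ holding in $A$ — is nonempty. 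From any extension $\bar g\colon A\to\mathbb{R}_{\infty}$ of $g$ we get $\bar g(a)=g(a)\neq g(a')=\bar g(a')$, completing the argument. I expect this last step — nonemptiness of the admissible set in the one‑generator extension — to be the main obstacle; it is precisely the point at which the top element $\infty$ of $\mathbb{R}_{\infty}$, absorbing under convex combination, must absorb the non‑cancellative directions of $A$ (the behaviour exemplified by the absorbing element of the two‑point convex space). By comparison, the reduction to point separation, the classification of quotients of $\mbfI$, and the construction of $g$ on $S$ are routine.
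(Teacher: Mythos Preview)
The paper does not actually give a proof of this theorem; it simply defers to B\"orger and Kemper \cite{Borger}. Your outline is sound and is essentially the route taken there: one reduces the coseparator property to point-separation (which is legitimate because $\Cvx$ is a variety with the terminal object as free algebra on one generator), constructs a separating $\mathbb{R}_\infty$-valued map on the two-generated subspace, and then extends it to all of $A$ by the injectivity of $\mathbb{R}_\infty$ with respect to sub-convex-space inclusions.

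Your classification of the quotients of $\mbfI$ is correct, and the ``key point'' you isolate is in fact the right one; note only that the implication ``$q(t_0)=a'$ for some interior $t_0$ forces $q(s)=a'$ for all $s\in(0,1]$'' should be argued at the level of the kernel congruence on $\mbfI$ (from $t_0\sim 1$ one first gets $[t_0,1]$ in one class, then $[t_0^n,1]$ by iterated scaling toward $0$, hence $(0,1]$), rather than by trying to cancel in the target $S$, where cancellation may fail. The construction of $g\colon S\to\mathbb{R}_\infty$ in each of the three nontrivial cases is correct and uses exactly the absorbing behaviour of $\infty$ that you highlight.

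You have correctly located the only substantive step: the one-generator extension lemma showing that the admissible value set for a new point $x$ is a nonempty sub-order-interval of $\mathbb{R}_\infty$. This is precisely the technical heart of the B\"orger--Kemper argument, and completing your proposal would amount to reproducing it. So your plan is not merely plausible but is, modulo that lemma, the standard proof; there is no alternative argument in the present paper to compare it with.
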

See \cite{Borger} for the proof.

\vspace{.2in}

\paragraph{\textit{Notation}}
  In the category $\Cvx$, the arrows  $A \stackrel{m}{\longrightarrow} B$  are referred to as affine maps, and preserve ``convex sums'',
\be \nonumber
m( (1-\alpha) a_1 + \alpha a_2 ) = (1-\alpha) m(a_1) + \alpha m(a_2) \quad \alpha \in \mbfI.
\ee
For brevity,  a convex sum is often denoted by 
 \be \nonumber
 a_1 +_{\alpha} a_2 \stackrel{def}{=} (1-\alpha) a_1 + \alpha a_2 \quad \alpha \in \mbfI.
 \ee

\vspace{.2in}

\subsection{Boolean Subobjects of a Convex Space}   \label{BooleanSubobjects}

A subobject of a convex space $A$, say $A_0 \hookrightarrow A$, is called a \emph{Boolean} subobject when its set-theoretic complement $A_0^c$ is also a subobject of $A$. 
The functor $\mathbf{\Sigma}$ assigns to every convex space $A$ the measurable space $\mathbf{\Sigma}(A) = (A, \sa_A)$ \emph{generated} by the set of all Boolean  subobjects of $A$.   If $A_0$ is a Boolean subobject of $A$ then $A$ can be written as a sum (coproduct), $A = A_0 + A_0^c$.  

Using the convex space $\two=\{0,1\}$, which has the convex structure given  by 
\be \nonumber
(1-\alpha) \mathbf{0} + \alpha \one = \left\{ \begin{array}{ll} \mathbf{0} & \textrm{for all }\alpha \in [0,1) \\ \mathbf{1} & \textrm{otherwise} \end{array} \right..
\ee 
it is equivalent to say the $\sigma$-algebra of $A$ is generated by the set of all affine maps from $A$ into $\two$, $\Cvx(A, \two)$, since the preimages of such maps then yield the two complementary subobjects of $A$.\footnote{It is useful to this of the ``$\mathbf{0}$'' as ``$\infty$'', so that any nonzero quantity multiplied by $\infty$ gives back $\infty$, regardless of the contribution of the second term in the convex sum, $(1-\alpha)\mathbf{0} + \alpha \mathbf{1}$.}   In analogy with measurable spaces,  we will use the notation $\chi_{A_0}$ to denote elements of $\Cvx(A, \two)$.

The $\sigma$-algebra on a convex space $A$, generated by the set  $\Cvx(A, 2)$,  makes every affine map $A \stackrel{m}{\longrightarrow} B$  measurable since the composition of affine maps $A \stackrel{m}{\longrightarrow} B \rightarrow \two$ gives an element of the generating set for $\Sigma_A$.
It is elementary to verify that $\Sigma$ is functorial.

Viewing the unit interval as just a convex space,  all the subobjects of $\mbfI$ which are (sub)intervals of the form $(a,1]$ or $[a, 1]$ with $a>0$ have complements in $\mbfI$, and hence $\Sigma(\mbfI)$ yields the standard Borel $\sigma$-algebra on the unit interval.  Thus, applying the functor $\Sigma$ to the map $\T(\Sigma(\two)) \rightarrow \mbfI$ sending $\delta_0 +_{\alpha} \delta_1 \mapsto \alpha$, yields an isomorphism of measurable spaces.  Subsequently, rather than write $\Sigma(\mbfI)$, as well as  $\Sigma(\two)$, everywhere, we just write $\mbfI$ and $\two$, and let the context determine whether we are viewing these spaces as  convex spaces or measurable spaces.

Each element $a \in A$ determines a subobject of $A$ given by  
\be \nonumber
\langle \langle a \rangle \rangle = \{ b \in A \, | \, \exists c \in A, \, \exists \beta \in (0,1] s.t. \, \, a=c +_{\beta}b \}
\ee

\begin{lemma} \label{Lemma1} For each convex space $A$, and every $a \in A$ the map

\begin{equation}   \nonumber
 \begin{tikzpicture}[baseline=(current bounding box.center)]
 
  \node     (A) at  (0,0)  {$A$};
   \node   (2)  at   (6,0)   {$\two$};
  \node   (b)    at   (0,-1)   {$b$};
  \node   (c)    at   (5, -1)  {$\left\{ \begin{array}{ll} 1 & \textrm{iff }b \in \langle \langle a \rangle \rangle \\ 0 & \textrm{otherwise} \end{array} \right.$};
  \draw[->,above] (A) to node {$\chi_{\langle \langle a \rangle \rangle}$} (2);
  \draw[|->] (b) to node {$$} (c);

 \end{tikzpicture}
 \end{equation}
is affine, and hence $\langle \langle a \rangle \rangle$ and $\langle \langle a \rangle \rangle^c$ are both Boolean subobjects of $A$.
Moreover, we have the following properties:

\begin{enumerate}
\item If $\chi_{\langle \langle a \rangle \rangle}(b)=1$ then $\langle \langle b \rangle \rangle  \subset \langle \langle a\rangle \rangle$.
\item If $\chi_{\langle \langle a \rangle \rangle }(b) = \chi_{\langle \langle b \rangle \rangle}(a)=1$ then $\langle \langle a \rangle \rangle = \langle \langle b \rangle \rangle$.
\end{enumerate}
\end{lemma}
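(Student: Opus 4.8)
The plan is to reduce the statement to one combinatorial fact about the relation ``$b\in\gen{a}$''---namely that it is transitive and that $\gen{a}$ and $\gen{a}^{c}$ are each closed under convex sums---after which affineness of $\chi_{\gen{a}}$ and properties (1)--(2) follow formally. Everything rests on the following bookkeeping with the convex-space axioms, which I would record first: \emph{if $a=c+_{\beta}d$ with $\beta\in(0,1]$ and $d=d_{1}+_{\gamma}d_{2}$, then flattening the nested convex sum gives}
\[
a=(1-\beta)\,c+\beta(1-\gamma)\,d_{1}+\beta\gamma\,d_{2},
\]
so whenever $\beta\gamma>0$ one may absorb the first two summands into a single point $c'\in A$ and rewrite $a=c'+_{\beta\gamma}d_{2}$, which exhibits $d_{2}\in\gen{a}$ (the boundary case $\beta\gamma=1$ simply reads $a=d_{2}$); symmetrically $d_{1}\in\gen{a}$ whenever $\beta(1-\gamma)>0$. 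Note also $a=a+_{1}b$, so $a\in\gen{a}$ for every $a$.

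Using this observation (with $0<\gamma<1$) for one direction, I would next prove the following.

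\medskip
\noindent\textbf{Key Lemma.} For $b_{1},b_{2}\in A$ and $\alpha\in(0,1)$: $\;b_{1}+_{\alpha}b_{2}\in\gen{a}\;$ iff $\;b_{1}\in\gen{a}$ and $b_{2}\in\gen{a}$.
\medskip

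\noindent The forward implication is the observation applied to $d=b_{1}+_{\alpha}b_{2}$, whose coefficients are both positive since $\alpha\in(0,1)$. For the converse, take presentations $a=c_{i}+_{\beta_{i}}b_{i}$ with $\beta_{i}\in(0,1]$ ($i=1,2$) and average them, using $a=\lambda\,a+(1-\lambda)\,a$, with weight
\[
\lambda=\frac{(1-\alpha)\beta_{2}}{\alpha\beta_{1}+(1-\alpha)\beta_{2}}\in(0,1)
\]
on the first. Flattening yields a convex sum of $c_{1},c_{2},b_{1},b_{2}$ in which the coefficients of $b_{1}$ and $b_{2}$ stand in the ratio $\lambda\beta_{1}:(1-\lambda)\beta_{2}=(1-\alpha):\alpha$; they therefore recombine into $\gamma\,(b_{1}+_{\alpha}b_{2})$ with $\gamma=\lambda\beta_{1}+(1-\lambda)\beta_{2}=\dfrac{\beta_{1}\beta_{2}}{\alpha\beta_{1}+(1-\alpha)\beta_{2}}$, and since the denominator lies between $\beta_{1}$ and $\beta_{2}$ one has $0<\gamma\le 1$. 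Absorbing the $c_{i}$-terms into one point presents $a=c+_{\gamma}(b_{1}+_{\alpha}b_{2})$ (or $a=b_{1}+_{\alpha}b_{2}$ when $\gamma=1$), so $b_{1}+_{\alpha}b_{2}\in\gen{a}$. I expect this choice of $\lambda$, together with the check that $\gamma\le 1$, to be the only genuine computation; everything else is formal.

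Granting the Key Lemma, the rest is short. Affineness of $\chi_{\gen{a}}\colon A\to\two$ is the identity $\chi_{\gen{a}}(b_{1}+_{\alpha}b_{2})=\chi_{\gen{a}}(b_{1})+_{\alpha}\chi_{\gen{a}}(b_{2})$: the cases $\alpha\in\{0,1\}$ are trivial, and for $\alpha\in(0,1)$, since in $\two$ the combination $\chi_{\gen{a}}(b_{1})+_{\alpha}\chi_{\gen{a}}(b_{2})$ equals $\mathbf{1}$ exactly when both arguments equal $\mathbf{1}$, the identity is precisely the equivalence of the Key Lemma. Hence $\chi_{\gen{a}}\in\Cvx(A,\two)$, and so $\gen{a}=\chi_{\gen{a}}^{-1}(\mathbf{1})$ and $\gen{a}^{c}=\chi_{\gen{a}}^{-1}(\mathbf{0})$ are complementary subobjects of $A$---each therefore a Boolean subobject. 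For property (1): if $\chi_{\gen{a}}(b)=1$, write $a=c+_{\beta}b$ with $\beta\in(0,1]$; then for any $e\in\gen{b}$, say $b=f+_{\gamma}e$ with $\gamma\in(0,1]$, the observation above (with $d=b=f+_{\gamma}e$, so $e$ acquires coefficient $\beta\gamma>0$ in the expansion of $a$) gives $e\in\gen{a}$, hence $\gen{b}\subseteq\gen{a}$. Property (2) is then immediate: its hypotheses give $b\in\gen{a}$ and $a\in\gen{b}$, so (1) applied twice yields $\gen{b}\subseteq\gen{a}$ and $\gen{a}\subseteq\gen{b}$.
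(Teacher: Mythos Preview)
Your argument is correct: the Key Lemma is exactly the right formulation, the choice of $\lambda$ and the verification that $\gamma\in(0,1]$ are clean, and affineness of $\chi_{\gen{a}}$ together with properties (1) and (2) then follow formally as you describe. One small remark: when you invoke the convex structure on $\two$ to say that $\chi_{\gen{a}}(b_{1})+_{\alpha}\chi_{\gen{a}}(b_{2})=\mathbf{1}$ iff both values are $\mathbf{1}$, you are implicitly using that $\mathbf{0}$ is absorbing for any positive weight (the paper's footnote about thinking of $\mathbf{0}$ as $\infty$); this is fine, but worth stating since the paper only records $(1-\alpha)\mathbf{0}+\alpha\mathbf{1}$ explicitly.

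As for comparison with the paper: there is essentially nothing to compare. The paper does not prove this lemma; it simply declares the proof ``straightforward'' and cites B\"orger and Kemp. Your write-up is therefore strictly more informative than what appears in the paper, and your direct combinatorial verification via the Key Lemma is precisely the kind of argument one would expect to find in the cited source.
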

\noindent
The proof, which is straightforward, can be found in B\"{o}rger and Kemp\cite{Borger}.   Note that  $\langle \langle a \rangle \rangle$ and $\langle \langle a \rangle \rangle^c$ are therefore in the generating set for $\Sigma_A$ (and hence measurable under the functor $\Sigma$).

Given any  Boolean subobject $A_0 \hookrightarrow A$, if  $a \in A_0$ it follows that  $\gen{a} \subset A_0$  because if $b \in \langle \langle a \rangle \rangle$ then there exist a $d \in A$ and an $\alpha \in (0,1]$ such that
\be \nonumber
a = d +_{\alpha} b.
\ee
Since  $a \in A_0$, applying the affine map $\chi_{A_0}$ shows that
\be \nonumber
1 = \chi_{A_0}(a) = \chi_{A_0}(d) +_{\alpha} \chi_{A_0}(b).
\ee
and hence both $d$ and $b$ must also be in $A_0$, and hence  $\langle \langle a \rangle \rangle \subset A_0$.  The reverse equality follows from the fact $a \in \gen{a}$ for every $a \in A$.  Consequently we have, for every Boolean subobject $A_0$ 
\be \label{PreDe}
\bigcup_{a \in A_0} \langle \langle a \rangle \rangle = A_0.
\ee

Note that the two
constant maps,
\begin{equation}   \nonumber
 \begin{tikzpicture}[baseline=(current bounding box.center)]
 
  \node     (X) at  (0,0)  {$A$};
   \node   (TX)  at   (4,0)   {$\two$};
  \node    (x)   at   (0, -.8)   {$a$};
  \node    (dx) at   (4, -.8)  {$1$};
  
  \draw[->,above] (X) to node {$\chi_A$} (TX);
   \draw[|->] (x) to node {} (dx);

  \node     (Y) at  (6,0)  {$A$};
   \node   (TY)  at   (10,0)   {$\two$};
  \node    (y)   at   (6, -.8)   {$a$};
  \node    (dy) at   (10, -.8)  {$0$};
  
  \draw[->,above] (Y) to node {$\chi_{\emptyset}$} (TY);
   \draw[|->] (y) to node {} (dy);

 \end{tikzpicture}
 \end{equation}
are both in $\two^A$, and hence $A$ and $\emptyset$ are in the generating set for $\Sigma A$.  Moreover, 

\begin{lemma} \label{piSystem} The set of Boolean subobject of $A$  is closed under finite intersections. Hence the Boolean subobjects of any convex space form a $\pi$ system.
\end{lemma}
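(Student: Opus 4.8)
The plan is to reduce everything to the binary case: show that the intersection $A_0\cap A_1$ of two Boolean subobjects of $A$ is again a Boolean subobject, and then iterate. Since it has already been observed that a subobject of $A$ is Boolean exactly when it arises as the preimage of $1$ under some affine map in $\Cvx(A,\two)$, the cleanest route is to exhibit such an affine map for $A_0\cap A_1$ by composing the pairing $\langle\chi_{A_0},\chi_{A_1}\rangle:A\to\two\times\two$ with a conjunction map $\wedge:\two\times\two\to\two$.

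First I would pin down the convex structure on the product: because $\Cvx$ is algebraic over $\Set$, the categorical product $\two\times\two$ is the set $\{0,1\}^2$ with componentwise convex combinations, and on $\two$ itself one has $x+_\alpha y=\min(x,y)$ for $\alpha\in(0,1)$ (so $\mathbf 1$ appears only when both arguments are $\mathbf 1$), while $x+_0y=x$ and $x+_1y=y$. The heart of the argument is then to check that $\wedge:\two\times\two\to\two$, $(x,y)\mapsto\min(x,y)$, is affine: for interior $\alpha$ both $\wedge\bigl((x_1,y_1)+_\alpha(x_2,y_2)\bigr)$ and $\wedge(x_1,y_1)+_\alpha\wedge(x_2,y_2)$ reduce to $\min(x_1,x_2,y_1,y_2)$, and the cases $\alpha\in\{0,1\}$ are immediate. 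This is the one place that requires care, since the convex structure on $\two$ is asymmetric --- $\mathbf 1$ survives a convex combination only when its coefficient is exactly $1$ --- so one cannot simply ``multiply characteristic functions'' as one would over $\mathbb R$; the verification must use the combinatorial description of $+_\alpha$ on $\two$ just recalled.

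With $\wedge$ affine, the rest is formal. Given Boolean subobjects $A_0,A_1\hookrightarrow A$ with characteristic maps $\chi_{A_0},\chi_{A_1}\in\Cvx(A,\two)$, the pairing $\langle\chi_{A_0},\chi_{A_1}\rangle:A\to\two\times\two$ is affine by the universal property of the product, hence so is $\wedge\circ\langle\chi_{A_0},\chi_{A_1}\rangle:A\to\two$; and this composite sends $b$ to $1$ precisely when $b\in A_0$ and $b\in A_1$, so it is $\chi_{A_0\cap A_1}$. Thus $A_0\cap A_1$ is a Boolean subobject. (One can also argue without the product: $A_0\cap A_1$ is a sub-convex-space, being an intersection of two, and its complement $A_0^c\cup A_1^c$ is a sub-convex-space because if $b_1+_\alpha b_2\in A_0\cap A_1$ with $\alpha\in(0,1)$, then affineness of $\chi_{A_0}$ and of $\chi_{A_1}$ together with the formula $x+_\alpha y=\min(x,y)$ forces $b_1,b_2\in A_0$ and $b_1,b_2\in A_1$, while for $\alpha\in\{0,1\}$ the combination lies in $\{b_1,b_2\}$.) An easy induction extends this to all finite intersections. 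Finally, since $A$ is itself a Boolean subobject (noted just above, via the constant map $\chi_A$), the family of Boolean subobjects of $A$ is nonempty and closed under finite intersection, i.e.\ it is a $\pi$-system.
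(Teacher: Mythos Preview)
Your proof is correct. The paper's own argument is a one-line direct verification: it simply observes that $A_0\cap A_1$ is a subobject with (convex) complement $(A_0\cap A_1^c)\cup(A_0^c\cap A_1)\cup(A_0^c\cap A_1^c)$, without spelling out why either set is closed under convex combinations. Your parenthetical ``without the product'' argument is exactly this claim made precise, using the formula $x+_\alpha y=\min(x,y)$ on $\two$ to show that $b_1+_\alpha b_2\in A_0\cap A_1$ forces $b_1,b_2\in A_0\cap A_1$.

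Your primary route, via the affine conjunction $\wedge:\two\times\two\to\two$, is genuinely different in flavor: rather than checking convexity of the complement by hand, you stay inside the characterization ``Boolean subobject $=$ fibre of an affine map to $\two$'' and manufacture $\chi_{A_0\cap A_1}$ as a composite of affine maps. This is cleaner conceptually and makes transparent that the lattice operations on Boolean subobjects are reflected by affine structure on $\two$; the paper's approach is shorter but leaves the verification implicit. Both amount to the same computation once one unwinds $\wedge\circ\langle\chi_{A_0},\chi_{A_1}\rangle$ pointwise.
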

\begin{proof}
The set of Boolean subobject  is closed under finite intersection since if $A_0$ and $A_1$ are any two  Boolean subobjects of $A$, then $A_0 \cap A_1$ is a subobject of $A$ with Boolean complement $(A_0\cap A_1^c) \cup (A_0^c \cap A_1) \cup (A_0^c \cap A_1^c)$.  Thus it is a $\pi$-system. 

\end{proof}

\vspace{.1in}

\subsection{The density of the unit interval}  \label{density}
The full subcategory of $\Cvx$ consisting of the single object, the unit interval $\mbfI=[0,1]$, with all its endomorphisms, is  a left adequate (dense) subcategory of $\Cvx$.  This property follows from a theorem due to Isbell\cite{Isbell}   showing that if a category $\mathcal{C}$ is a full subcategory of algebras with operations at most $n$-ary, then the free algebra on $n$ generators, with all its endomorphisms  is a dense subcategory.\footnote{Isbell used the terminology of \emph{left-adequate} rather than dense. We subsequently also use this terminology in our subsequent discussion.}  
Meng\cite{Meng} treats the theory of convex spaces as the full subcategory of the category of $\K-$modules, with the same objects  but only the idempotent operations.  An $n$-ary operation $A^n \stackrel{\phi}{\longrightarrow} A$ is idempotent provided that $\phi \circ \Delta_n = id_A$, where $\Delta_n$ is the diagonal mapping.  Using this the theory of convex spaces can be defined inductively  using only the  operations of maximum arity $2$ to define the theory.\footnote{This is, of course, similiar to the situation in ordinary addition where knowing the single operation ``+''  suffices to define the meaning of $(a+b)+c = a+(b+c)$.} Then, since the free object on  $2$ elements in $\Cvx$ is the unit interval $\mbfI$, it follows that the subcategory $\I$, consisting of the single object $\mbfI$ along with all the affine endomorphisms $\Cvx(\mbfI, \mbfI)$, is adequate in $\Cvx$.

 The slice category $\I/A$ has as objects affine maps $\mbfI  \stackrel{n}{\longrightarrow} A$, and as arrows, affine maps $\mbfI \stackrel{\gamma}{\longrightarrow} \mbfI$ such that the diagram 
\begin{equation}   \nonumber
 \begin{tikzpicture}[baseline=(current bounding box.center)]

          \node  (I1) at  (0,0)  {$\mbfI$};       \node   (I2)  at  (3,0)   {$\mbfI$};
         \node  (A)  at (1.5,-1.5)    {$A$};
         
         \draw[->,above] (I1) to node {$\gamma$} (I2);
         \draw[->,left] (I1) to node {$n$} (A);
         \draw[->,right] (I2) to node {$m$} (A);
                  
	 \end{tikzpicture}
 \end{equation}
 \noindent
commutes.  Left adequacy (density) of $\I$ means  every  convex space $A$ is a canonical limit
\be \nonumber
A \cong colim \bigg( \I/A \stackrel{\pi}{\longrightarrow} \I \hookrightarrow \Cvx \bigg),
\ee
where $\pi$ is the projection map (functor).  Left adequacy of $\I$, using the definition given by Isbell (extended to the cosmos\footnote{A SMCC which is complete and cocomplete.} $\Cvx$ rather than $\Set$), is that the restricted Yoneda embedding 
\begin{equation}   \nonumber
 \begin{tikzpicture}[baseline=(current bounding box.center)]

          \node  (C) at  (0,0)  {$\Cvx$};       \node   (CI)  at  (5,0)   {$\Cvx^{\I^{op}}$};
         \node  (A)  at (0,-.8)    {$A$};      \node  (hA) at   (5, -.8)  {$\Cvx(\cdot, A)$}; 
         \node   (m) at   (0, -1.8)  {$A \stackrel{m}{\longrightarrow} B$};  \node   (hm) at  (5, -1.8)  {$\Cvx(\cdot, A) \stackrel{\Cvx(\cdot,m)}{\longrightarrow} \Cvx(\cdot,B)$};
         
         \draw[->,above] (C) to node {$Y|$} (CI);
         \draw[|->] (A) to node {} (hA);
         \draw[|->,right] (m) to node {} (hm);
                  
	 \end{tikzpicture}
 \end{equation}
is a full and faithful functor.   The equivalence between these two definitions follows from the fact that every object $\mathcal{F} \in_{ob} \Cvx^{\I^{op}}$ is a colimit of representables of the functor category $\Cvx^{\I^{op}}$.

The maps of the slice category $\I/A$,  $\psi \in \Cvx(\mbfI,\mbfI)$, can be characterized via a ``scaling'' parameter $s$, and a  ``translation'' parameter $t$,   which defines  the transformation given, for all $\alpha \in \mbfI$, by
\be \nonumber
\langle s,t \rangle (\alpha) = s \alpha + t  \quad \quad \forall s \in [-1,1],  \, \, \forall  t \in [0,1],  \, \, \textrm{satisfying }0 \le s + t \le 1,
\ee
and such a transformation factors as a composite, $\langle s, t \rangle = \langle 1, t \rangle \circ \langle s, 0 \rangle$.  

An object of $\I/A$ is called a  \textbf{path map} in $A$ and denoted
\begin{equation}   \nonumber
 \begin{tikzpicture}[baseline=(current bounding box.center)]
  \node  (I) at  (0,0)   {$\mbfI$};
  \node   (I2) at  (4, 0)  {$A$};
  \node   (x)  at  (0,-.8)  {$\alpha$};
  \node   (fx) at   (4,-.8)  {$a_1 +_{\alpha}  a_2$};
  
  \draw[->,above] (I) to node {$\lbrack a_1, a_2 \rbrack$} (I2);
  \draw[|->] (x) to node {} (fx);

 \end{tikzpicture}
 \end{equation}
sending the two generators of $\mbfI$, denoted by $\underline{0}$ and $\underline{1}$,  to \emph{any}  two elements $a_1, a_2 \in A$,
\be \nonumber
\begin{array}{ccc}
\underline{0} & \mapsto & a_1 \\
\underline{1} & \mapsto & a_2
\end{array},
\ee
and the convex structure of $A$ itself then ``determines all the intermediate points'' on that path.   Thus, every affine map $\mbfI \rightarrow A$ is a path map in $A$, and the slice category $\I/A$ gives the basic figure types, which are the path maps, and all the incidence relationships between these path maps which completely characterize the convex space. 

\vspace{.2in}
The set of all path maps into the convex space $\two$
 are depicted in the diagram
\begin{equation}   \nonumber
 \begin{tikzpicture}[baseline=(current bounding box.center)]
  \node  (I) at  (1,0)   {$\mbfI_{0,1}$};
  \node   (I0) at  (-1, -2)  {$\mbfI_{0,0}$};
  \node  (I1)  at   (-1, 2)  {$\mbfI_{1,1}$};

  \node   (2) at  (5,0) {$\two$};
  \draw[->,below] (I0) to node {$\overline{0}$} (2);
   \draw[->,above] (I) to node [xshift=-12pt,yshift=-2pt] {$\epsilon_{\two}=\lbrack 0,1\rbrack$}(2);
  \draw[->,above] (I1) to node {$\overline{1}$} (2);
  \draw[->,left] ([xshift=-2pt] I0.north) to node {\small{$\langle -1, 1\rangle$}} ([xshift=-2pt] I1.south);
 \draw[->,right] ([xshift=2pt] I1.south) to node {\small{$\langle -1, 1 \rangle$}} ([xshift=2pt] I0.north);
 \draw[->,right] (I1) to node {$\overline{1}$} (I);
 \draw[->,right] (I0) to node {$\overline{0}$} (I);

 \end{tikzpicture}
 \end{equation}
 \noindent
Both of the constant maps, $\mbfI_{0,0} \stackrel{\overline{0}}{\longrightarrow} \mbfI$ and $\mbfI_{1,1} \stackrel{\overline{1}}{\longrightarrow} \mbfI$, have the identity map  (not shown) as a section.   
The map $\epsilon_{\two}$ is given explicitly by
\begin{equation}   \label{def2}
 \begin{tikzpicture}[baseline=(current bounding box.center)]

         \node  (I2)  at   (-1,-4)     {$\mbfI$};
         \node   (22) at   (3.5,-4)    {$\two$};
         \draw[->, above] (I2) to node {$\epsilon_{\two}$} (22);
         
        \node  (a)  at   (-1, -4.8)   {$\alpha$};
         \node    (e)  at    (3.5,-4.8)    {$[0,1](\alpha) = \left\{ \begin{array}{ll} 0 & \textrm{for all }\alpha \in [0,1) \\ 1 & \textrm{for }\alpha=1 \end{array} \right..$};
\draw[|->] (a) to node {} (e);
 \end{tikzpicture}
 \end{equation}
 \noindent
We next use this map $\epsilon_{\two}$ to show that $\Sigma A$ is separated.

\section{The symmetric monoidal closed structure of $\M$.}  \label{SMCCMeas}

Throughout this section, $X$ and $Y$ denote measurable spaces.
The category $\M$  is a SMCC with the tensor product $X \otimes Y$  defined by the coinduced (final) $\sigma$-algebra such that all the  graph functions
\be  \nonumber
\begin{array}{ccccc}
\Gamma_f &:& X & \longrightarrow & X \times Y \\
&:& x & \mapsto & (x,f(x))
\end{array}
\ee 
for $X \stackrel{f}{ \longrightarrow} Y$ a measurable function, as well as the graph functions
\be  \nonumber
\begin{array}{ccccc}
\Gamma_g &:& Y & \longrightarrow & X \times Y \\
&:& y & \mapsto & (g(y),y)
\end{array}
\ee
for $Y \stackrel{g}{\longrightarrow} X$ a measurable function, are measurable. 
 
Let $Y^X$ denote the set of all measurable functions from $X$ to $Y$ endowed with the $\sigma$-algebra induced by the set of all point evaluation maps\footnote{This is equivalent to saying $Y^X$ has the product $\sigma$-algebra induced by the coordinate projection maps onto $Y$.} 
\be \nonumber
\begin{array}{ccc}
Y^X &\stackrel{ev_x}{\longrightarrow}& Y \\
\ulcorner f \urcorner & \mapsto & f(x) 
\end{array}
\ee 

   Because the $\sigma$-algebra structure on tensor product spaces is defined  such that the  graph functions are all measurable, it follows in particular the constant graph functions \mbox{$X \stackrel{\Gamma_{\ulcorner f \urcorner}}{\longrightarrow} X \otimes Y^X$} sending $x \mapsto (x,\ulcorner f \urcorner)$ are measurable.

Define the evaluation function
 \be \nonumber
 \begin{array}{ccc}
 X \otimes Y^X & \stackrel{ev_{X,Y}}{\longrightarrow}& Y \\
 (x,\ulcorner f \urcorner) & \mapsto & f(x)
 \end{array}
 \ee
and observe that for every $\ulcorner f \urcorner  \in Y^X$ the right hand diagram in the $\M$ diagrams

\be  \nonumber
 \begin{tikzpicture}[baseline=(current bounding box.center)]
 	\node	(X)	at	(0,-1.8)              {$X \cong X \otimes 1$};
	\node	(XY)	at	(0,0)	               {$ X \otimes Y^X$};
	\node	(Y)	at	(3,0)               {$Y$};
         \node         (1)    at      (-4,-1.8)             {$1$};
         \node         (YX) at      (-4,0)              {$Y^X$};

	\draw[->, left] (X) to node  {$\Gamma_{\ulcorner f \urcorner}\cong  Id_X \otimes \ulcorner f \urcorner$} (XY);
	\draw[->,below, right] (X) to node [xshift=3pt,yshift=-1pt] {$f$} (Y);
	\draw[->,above] (XY) to node {$ev_{X,Y}$} (Y);
	\draw[->,left] (1) to node {$\ulcorner f \urcorner$} (YX);

 \end{tikzpicture}
 \ee
\noindent 
is commutative as a set mapping, $f = ev_{X,Y} \circ \Gamma_{\ulcorner f \urcorner}$.   
By rotating the above diagram and also considering the constant graph functions $\Gamma_{x}$  the right hand side of the  diagram 
\be \nonumber 
 \begin{tikzpicture}[baseline=(current bounding box.center)]
        \node          (X)    at      (-3,0)             {$X$};
 	\node	(YX)	at	(3,0)              {$Y^X$};
	\node	(XY)	at	(0,0)	               {$ X \otimes Y^X$};
	\node	(Y)	at	(0,-1.8)               {$Y$};

         \draw[->,above] (X) to node {$\Gamma_{\ulcorner f \urcorner}$} (XY);
         \draw[->,left] (X) to node [xshift=-7pt] {$f$} (Y);
	\draw[->, above] (YX) to node  {$\Gamma_{x}$} (XY);
	\draw[->,right] (YX) to node [xshift=5pt,yshift=0pt] {$ev_x$} (Y);
	\draw[->,right] (XY) to node {$ev_{X,Y}$} (Y);

 \end{tikzpicture}
 \ee
\noindent 
also commutes for every $x \in X$.
Since $f$ and $\Gamma_{\ulcorner f \urcorner}$ are measurable, as are $ev_{x}$ and $\Gamma_{x}$,  it follows by
the elementary result on coinduced $\sigma$-algebras 
   
\begin{lemma} \label{coinduced} Let the $\sigma$-algebra of  $\, Y$ be coinduced by a collection of maps \mbox{$\{f_j : X_j \rightarrow Y \}_{j \in J}$}.   Then a function $g:Y \rightarrow Z$ is measurable if and only if the composition $g \circ f_j$ is measurable for each $j \in J$.
\end{lemma}
\noindent
that $ev_{X,Y}$ is measurable because the graph functions generate the $\sigma$-algebra of $X \otimes Y^X$.  
 
More generally, given any measurable function $f:X \otimes Z \rightarrow Y$ there exists a unique measurable map $\tilde{f} : Z  \rightarrow  Y^X$  defined by $\tilde{f}(z) = \ulcorner f(\cdot,z) \urcorner: 1 \rightarrow Y^X$ where $f(\cdot,z): X \rightarrow Y$ sends $x \mapsto f(x,z)$.
This map $\tilde{f}$ is measurable because the $\sigma$-algebra is generated by the \emph{point evaluation} maps $ev_x$ and the diagram

\be  \nonumber
 \begin{tikzpicture}[baseline=(current bounding box.center)]
        \node          (XZ)    at      (3,-1.8)             {$X \otimes Z$};
 	\node	(YX)	at	(0,0)              {$Y^X$};
	\node	(Y)	at	(3,0)               {$Y$};
	\node	(Z)	at	(0,-1.8)	               {$Z$};

         \draw[->,above] (YX) to node {$ev_x$} (Y);
         \draw[->,left] (Z) to node [xshift=-3pt] {$\tilde{f}$} (YX);
	\draw[->, above] (Z) to node [xshift=5pt] {$\Gamma_{x}$} (XZ);
	\draw[->,right] (XZ) to node [xshift=5pt,yshift=0pt] {$f$} (Y);
	\draw[->,right,dashed] (Z) to node {$$} (Y);

 \end{tikzpicture}
\ee
\noindent 
commutes so that $\tilde{f}^{-1}( ev_x^{-1}(B)) = (f \circ \Gamma_{x})^{-1}(B) \in \sa_Z$.

Conversely given any measurable map $g  : Z \rightarrow Y^X$ it follows the composite 
\mbox{$ev_{X,Y} \circ (Id_X \otimes g)$}
is a measurable map. This determines a bijective correspondence 
\be \nonumber
\M(X \otimes Z,Y) \cong \M(X,Y^Z).
\ee

\paragraph{Double dualization into the unit interval $\mbfI$}    As the function space $\mbfI^X$ has the product \mbox{$\sigma$-algebra} it follows that each of the 
 point evaluation maps $\mbfI^X \stackrel{ev_x}{\longrightarrow} \mbfI$, for every $x \in X$,  is measurable.

\begin{lemma}  \label{measI}
Given any measurable space $X$ the double dual mapping\footnote{In this diagram and those to follow we abuse notation following the doctrine of  expressing the mapping into a function space not as the name of an element, like $\ulcorner ev_x \urcorner \in \mbfI^{\mbfI^X}$ for the given map $\eta_X(x)$,  but rather as the morphism corresponding to the named element.  The dashed arrow notation is employed to make it easier to read given the multiple arrows involved.}
\begin{equation}   \nonumber
 \begin{tikzpicture}[baseline=(current bounding box.center)]
         \node  (PA)  at (0,0)    {$X$};
         \node  (I)     at    (4,0)     {$\mbfI^{\mbfI^X}$};

         \node  (PA2)  at (0,-.7)    {$x$};
         \node  (I2)     at    (4,-.7)     {$\mbfI^X \stackrel{ev_x}{\longrightarrow} \mbfI$};
         
	\draw[->,above] (PA) to node {$\eta_{X}$} (I);
	\draw[|->,densely dashed] (PA2) to node {} (I2);
	
\end{tikzpicture}
 \end{equation}
 \noindent
is a measurable function.
\end{lemma}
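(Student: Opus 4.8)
The plan is to use the same strategy the paper has just employed twice: invoke the characterization of measurability against a coinduced/induced $\sigma$-algebra. Here the relevant $\sigma$-algebra sits on the \emph{codomain} $\mbfI^{\mbfI^X}$, which by definition carries the $\sigma$-algebra induced (generated) by the point-evaluation maps $ev_{\ulcorner g \urcorner} : \mbfI^{\mbfI^X} \to \mbfI$, one for each $g \in \mbfI^X$. So a function $h : X \to \mbfI^{\mbfI^X}$ is measurable if and only if $ev_{\ulcorner g \urcorner} \circ h$ is measurable for every $g \in \mbfI^X$. (This is just the universal property of the product $\sigma$-algebra; it is the ``induced'' dual of Lemma~\ref{coinduced}, and one typically cites it in the same breath.)

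First I would unwind the composite $ev_{\ulcorner g \urcorner} \circ \eta_X$ pointwise: for $x \in X$, $\eta_X(x) = \ulcorner ev_x \urcorner$, and applying $ev_{\ulcorner g \urcorner}$ to $ev_x \in \mbfI^{\mbfI^X}$ gives $ev_x(\ulcorner g \urcorner) = g(x)$. Hence $ev_{\ulcorner g \urcorner} \circ \eta_X = g$ as a set map $X \to \mbfI$. Since $g$ was chosen to be a measurable function (it is an element of the function space $\mbfI^X$, which by definition consists of measurable maps), the composite $ev_{\ulcorner g \urcorner} \circ \eta_X$ is measurable for every $g$. By the product-$\sigma$-algebra criterion applied to $\mbfI^{\mbfI^X}$, this shows $\eta_X$ is measurable, completing the proof.

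The one place to be slightly careful — and what I'd flag as the only real ``obstacle,'' though it is a bookkeeping matter rather than a mathematical one — is keeping straight the three layers of function spaces and which evaluation map is being used at each layer: $ev_x : \mbfI^X \to \mbfI$ evaluates at a point $x \in X$, whereas $ev_{\ulcorner g \urcorner} : \mbfI^{\mbfI^X} \to \mbfI$ evaluates at a point $\ulcorner g \urcorner$ of the space $\mbfI^X$. The collapse $ev_{\ulcorner g \urcorner}(\ulcorner ev_x \urcorner) = g(x)$ is precisely the ``swap'' underlying double dualization, and once it is written down the measurability is immediate. No appeal to the tensor structure or to the exponential adjunction is needed for this particular lemma; it rests only on the fact that $\mbfI^X$ has the product $\sigma$-algebra and that its elements are, by fiat, measurable.
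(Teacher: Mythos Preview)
Your proof is correct and is essentially the paper's own argument: the paper notes that the maps $\{ev_f\}_{f \in \M(X,\mbfI)}$ generate $\Sigma_{\mbfI^{\mbfI^X}}$ and computes $\eta_X^{-1}(ev_f^{-1}(U)) = f^{-1}(U)$, which is exactly your identity $ev_{\ulcorner g \urcorner} \circ \eta_X = g$ read at the level of preimages. The only difference is presentational---you phrase it via composites and the induced-$\sigma$-algebra criterion, the paper via a direct preimage calculation.
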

\proof
Since the functions $\{ev_f\}_{f \in \M(X,\mbfI)}$ generate $\sa_{\mbfI^{\mbfI^X}}$ it suffices to show that \\
\mbox{$\eta^{-1}_{X}(ev_f^{-1}(U)) \in \sa_X$ for $U \in \sa_{\mbfI}$}.  But this set is just $f^{-1}(U)$ which is measurable since $f$ is measurable.
\endproof

\begin{remark}
We will subsequently show that when $X=\Sigma A$, for some convex space $A$, then the convex space $\mbfI^{\mbfI^A}$, restricted to the weakly averaging affine maps, $\mbfI^{\mbfI^A}|_{wa}$, is an  isomorphism, $A \cong \mbfI^{\mbfI^A}|_{wa}$.  Consequently every such weakly averaging affine map $\mbfI^A \stackrel{P}{\longrightarrow} \mbfI$ which (algebraically) we view as probability measures on $A$, must correspond to a point evaluation, i.e., $P = ev_a$ for some $a \in A$.  Of course, this is not true when $X \ne \Sigma A$ for some convex space $A$.  

In the case $\mbfI^{X} \stackrel{P}{\longrightarrow} \mbfI$ where $P$ is a weakly averaging affine (measurable) function, we still can view $P$ algebraically as a map between convex spaces, since the function space $\mbfI^X=\M(X, \mbfI)$ has a convex structure defined pointwise, $(f+_{\alpha} g)(x) = f(x) +_{\alpha} g(x)$.  It is this perspective that leads us to the result that $\G(X) \cong \mbfI^{\mbfI^X}|_{wa}$.  This equivalence corresponds to a natural isomorphism of monads, $\G \cong \mbfI^{\mbfI^{\bullet}}|_{wa}$, where the latter monad is the (codomain restricted) double dualization monad into $\mbfI$.   This in turn leads to the perspective that integration is just the evaluation function $\mbfI^{\mbfI^X}|_{wa} \otimes \mbfI^X \stackrel{ev}{\longrightarrow} \mbfI$, which using the SMCC structure of $\M$, is therefore a measurable function, as are the weakly averaging affine maps (which correspond to probability measures).  These ideas are discussed in \cite{Sturtz}.  This paper extends that perspective leading to the idea that \emph{probability theory} should be viewed within the framework of  the category $\Cvx$, where the basic ideas of Bayesian probability, (1)  Bayesian models and (2) calculation of inference maps,  can be extended to yield a richer modeling framework as well as the development of  computationally more efficient inference algorithms.  The latter idea coming from the idea that since $\Cvx$ has a much richer (nicer) categorical structure than $\M_{\G}$, computations should be easier than in $\M_{\G}$ where we traditionally do Bayesian calculations.\cite{Culbertson}  (The need for ``quotient spaces'' in the computation of inference maps is the motivation for our development in this paper. $\Cvx$ has quotients, $\M_{\G}$ lacks quotients.) We hope to give a formal  justification for the remark \emph{computationally more efficient} in a future paper.
\end{remark}

 \section{The measurable space $\Sigma A$ is separated}  \label{separated}
 Given any measurable space $X$, we say it is separated if and only if for any two distinct points $x_1, x_2$ in the space there is a measurable subset $U \in \Sigma_X$ such that $x_1 \in U$ while $x_2 \not \in U$.
 
  To prove the space $\Sigma A$ is separated  we start with the fact that the object $\mathbb{R}_{\infty}$ is a coseparator in $\Cvx$.  Hence  for any two distinct points $a_1, a_2 \in A$, there exist an affine  map $A \stackrel{m}{\longrightarrow} \mathbb{R}_{\infty}$ which separates the pair, $m(a_1) \ne m(a_2)$.  The composite map
\begin{equation}   \label{def2}
 \begin{tikzpicture}[baseline=(current bounding box.center)]

         \node  (I)  at   (0,0)     {$\mbfI$};
         \node   (A) at   (3, 0)    {$A$};
         \node   (R) at  (6,0)   {$\mathbb{R}_{\infty}$};
         
         \draw[->, above] (I) to node {$[a_1, a_2]$} (A);
         \draw[->,above] (A) to node {$m$} (R);

 \end{tikzpicture}
 \end{equation}
\noindent
is a path in $\mathbb{R}_{\infty}$ with $m(a_1) \ne m(a_2)$.  Without loss of generality, we can assume that $m(a_1)< m(a_2)$, and hence $m(a_1) < \infty$.  Now observe, that for any $\omega \in \mathbb{R}_{\infty}$, the two subsets $(-\infty, \omega)$ and $[\omega, \infty]$ are complementary subobjects of $\mathbb{R}_{\infty}$.  In otherwords, $\mathbb{R}_{\infty}$ can be viewed as the sum (coproduct) of the two subobjects, 
\be  \nonumber
\begin{tikzpicture}[baseline=(current bounding box.center)]
\node  (R)  at   (0,0)  {$\mathbb{R}_{\infty}$};
\node   (l)  at    (-3,0)  {$(-\infty, \omega)$};
\node    (r)  at   (3,0)    {$ [\omega, \infty]$};
\draw[>->,above] (l) to node {$$} (R);
\draw[>->,above] (r) to node {$$} (R);
\end{tikzpicture}
\ee
with the inclusion maps.  Since this space is a coproduct, any map to $\two$ defined on the components yields a unique map from $\mathbb{R}_{\infty}$ to $\two$,
\be  \nonumber
\begin{tikzpicture}[baseline=(current bounding box.center)]
\node  (R)  at   (0,0)  {$\mathbb{R}_{\infty}$};
\node   (l)  at    (-3,0)  {$(-\infty, \omega)$};
\node    (r)  at   (3,0)    {$ [\omega, \infty]$};
\node    (2)  at   (0, -2)  {$\two$};
\draw[->, left] (l) to node [yshift=-3pt] {$\overline{0}$} (2);
\draw[->, right] (r) to node [yshift=-3pt] {$\overline{1}$} (2);
\draw[->, right] (R) to node [yshift=6pt] {$\chi_{[\omega,\infty]}$} (2);
\draw[>->,above] (l) to node {$$} (R);
\draw[>->,above] (r) to node {$$} (R);
\end{tikzpicture}
\ee
\noindent
That unique map is the characteristic function $\chi_{(-\infty, \omega)}$, which is in fact an affine map because for $x \in (-\infty, \omega)$ and $y \in [\omega, \infty]$ we have, 
\be \nonumber
x+_{\alpha} y \in (-\infty, \omega) \quad \quad \textrm{for all }\alpha \in [0,1)
\ee
and hence 
\be \nonumber
\chi_{[\omega,\infty]}(x +_{\alpha} y) = 0
\ee
whereas
\be \nonumber
\chi_{[\omega,\infty]}(y) +_{\alpha} \chi_{[\omega,\infty]}(x) = 0 +_{\alpha} 1 = 0.
\ee
Clearly, if both $x$ and $y$ are both in either component of $\mathbb{R}_{\infty}$ then $\chi_{[\omega, \infty]}$ is the constant function on that component, and hence affine.

Choose $\omega= m(a_2)$.  The map $A \stackrel{m}{\longrightarrow} \mathbb{R}_{\infty}$ which separates the two points, $a_1$ and $a_2$,  in conjunction with the map $\chi_{[m(a_2),\infty]}$, in turn yields a Boolean subobject pair of objects of $A$, with corresponding insertion maps, 
 \be  \nonumber
\begin{tikzpicture}[baseline=(current bounding box.center)]
\node  (A)  at    (0,2)  {$A$};
\node  (A1) at   (-3.5, 2)  {$m^{-1}((-\infty, m(a_2))$};
\node   (A2) at   (3.5, 2)   {$m^{-1}([m(a_2), \infty])$};
\draw[>->,above] (A1) to node {$\iota_1$} (A);
\draw[>->,above] (A2) to node {$\iota_2$} (A);

\node  (R)  at   (0,0)  {$\mathbb{R}_{\infty}$};
\node   (l)  at    (-3.5,0)  {$(-\infty, m(a_2))$};
\node    (r)  at   (3.5,0)    {$ [m(a_2), \infty]$};
\node    (2)  at   (0, -2.5)  {$\two$};
\draw[->, left] (l) to node [yshift=-3pt] {$\overline{0}$} (2);
\draw[->, right] (r) to node [yshift=-3pt] {$\overline{1}$} (2);
\draw[->, right] (R) to node [yshift=6pt] {\small{$\chi_{[m(a_2),\infty]}$}} (2);
\draw[>->,above] (l) to node {$$} (R);
\draw[>->,above] (r) to node {$$} (R);
\draw[->,right] (A) to node {$m$} (R);
\draw[->,left] (A1) to node {$m \circ \iota_1$} (l);
\draw[->,right] (A2) to node {$m \circ \iota_2$} (r);
\end{tikzpicture}
\ee
\noindent
where $m_1 = m \circ \iota_1$ and $m_2=m \circ \iota_2$ are the two restriction maps associated with $m$.
The composite map 
\be \nonumber
\chi_{m^{-1}([m(a_2), \infty])} = \chi_{[m(a_2),\infty]} \circ m
\ee
 yields the commutative diagram
 \begin{equation}   \label{def2}
 \begin{tikzpicture}[baseline=(current bounding box.center)]

         \node  (I)  at   (3,2)     {$\mbfI$};
         \node   (A) at   (3, 0)    {$A$};
         \node   (R) at  (3,-2)   {$\mathbb{R}_{\infty}$};
         \node    (2)  at  (7,0)    {$\two$};
         
         \draw[->, left] (I) to node {$[a_1, a_2]$} (A);
         \draw[->,left] (A) to node {$m$} (R);
         \draw[->,above] (A) to node [xshift=-3pt] {$\chi_{m^{-1}([m(a_2), \infty])}$} (2);
         \draw[->,right] (R) to node [yshift=-3pt]{$\chi_{[m(a_2), \infty]}$} (2);
         \draw[->,above] (I) to node {$\epsilon_{\two}$} (2);
 \end{tikzpicture}
 \end{equation}
\noindent
Since the Boolean subobjects of $A$ generate the $\sigma$-algebra on $A$, it follows that  the Boolean subobject pair of $A$, $m^{-1}((-\infty, m(a_2))$ and $m^{-1}([m(a_2),\infty])$, are both measurable in $\Sigma A$, and that they  separate the pair of distinct points, $\{a_1, a_2\}$, 
\be \nonumber  
a_1 \not \in m^{-1}([m(a_2),\infty]) \quad \quad \textrm{while } \quad \quad a_2 \in m^{-1}([m(a_2), \infty]).
\ee
All told, $\Sigma A$ is a separated measurable space.

\begin{cor} \label{ddMap}
 In $\Cvx$, the double dualization map
\begin{equation}   \nonumber
 \begin{tikzpicture}[baseline=(current bounding box.center)]
 
  \node     (A) at  (0,0)  {$A$};
   \node   (IIA)  at   (5,0)   {$\mbfI^{\mbfI^A}$};
  \node    (a)   at   (0, -.8)   {$a$};
  \node    (IA) at   (4, -.8)  {$\mbfI^A$};
  \node    (I)    at    (6, -.8)  {$\mbfI$};
  
  \draw[->,above] (A) to node {$\widetilde{\eta}_A$} (IIA);
   \draw[|->,dashed] (a) to node {} (IA);
   \draw[->,above] (IA) to node {$ev_a$} (I);
 \end{tikzpicture}
 \end{equation}
\noindent
is monic (=injective).
\end{cor}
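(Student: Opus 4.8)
The plan is to reduce the corollary to the statement that the affine functionals in $\mbfI^A$ separate the points of $A$. Indeed, two elements $a_1, a_2 \in A$ have the same image under $\widetilde{\eta}_A$ precisely when $ev_{a_1}$ and $ev_{a_2}$ agree as elements of $\mbfI^{\mbfI^A}$, that is, precisely when $f(a_1) = f(a_2)$ for every affine map $f \in \mbfI^A$. So it suffices to exhibit, for each pair $a_1 \neq a_2$, one affine map $f \colon A \to \mbfI$ with $f(a_1) \neq f(a_2)$; then $ev_{a_1}$ and $ev_{a_2}$ are distinguished by $f$, and $\widetilde{\eta}_A$ is injective.

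Such a functional is supplied by the separatedness of $\Sigma A$ just proved above. Since $\mathbb{R}_{\infty}$ is a coseparator and $\Sigma A$ is separated, any distinct pair $a_1, a_2$ is separated by a Boolean subobject: there is $A_0 \hookrightarrow A$ with $a_2 \in A_0$ and $a_1 \notin A_0$, and the associated affine map $\chi_{A_0} \colon A \to \two$ (the map $\chi_{m^{-1}([m(a_2),\infty])}$ of the preceding argument) has $\chi_{A_0}(a_1) = 0 \neq 1 = \chi_{A_0}(a_2)$. It remains to convert this $\two$-valued separator into an $\mbfI$-valued affine one. The route I would take is to transport it along the coproduct decomposition $A = A_0 + A_0^c$ furnished by the Boolean subobject: by the universal property of the coproduct in $\Cvx$, the constant map $A_0 \to \mbfI$ with value $1$ together with the constant map $A_0^c \to \mbfI$ with value $0$ induce a unique affine map $f \colon A \to \mbfI$, and then $f(a_1) = 0 \neq 1 = f(a_2)$, which finishes the proof.

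The step I expect to be the crux is exactly this passage from the $\two$-valued map to the map $f$ into $\mbfI$. One must check that gluing two constant maps along $A = A_0 + A_0^c$ really does define an affine map on all of $A$, and this is genuinely delicate: coproducts in $\Cvx$ are not disjoint unions of underlying sets — for instance $\one + \one \cong \mbfI$ rather than $\two$ — so one has to be sure that a convex combination of a point of $A_0$ with a point of $A_0^c$, which a priori lives in the coproduct, is faithfully accounted for by the convex structure already carried by $A$. Concretely, the naive attempt $a \mapsto 1$ for $a \in A_0$ and $a \mapsto 0$ for $a \in A_0^c$, read as a map into $\mbfI$ rather than into $\two$, fails to be affine precisely because the convex structures of $\two$ and $\mbfI$ disagree; so the decomposition $A = A_0 + A_0^c$ must be invoked with care, and verifying that it supplies what is needed here is where the real work lies. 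Everything else is formal manipulation together with an appeal to the already-established separatedness of $\Sigma A$.
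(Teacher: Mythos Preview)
You have correctly located the gap, and it cannot be closed. The decomposition $A = A_0 + A_0^c$ is \emph{not} a coproduct in $\Cvx$, despite the paper's earlier assertion: take $A = \mathbb{R}$ with the Boolean pair $A_0 = [0,\infty)$, $A_0^c = (-\infty,0)$; if $\mathbb{R}$ were their coproduct, the constants $A_0 \to \{1\}$ and $A_0^c \to \{0\}$ in $\mbfI$ would glue to an affine $h\colon \mathbb{R} \to \mbfI$ with $h(0) = 1$, yet also $h(0) = \tfrac{1}{2}h(1) + \tfrac{1}{2}h(-1) = \tfrac{1}{2}$. More decisively, $A = \mathbb{R}$ refutes the Corollary itself: every affine $f\colon \mathbb{R} \to \mbfI$ is constant (from $1 = \tfrac{n-1}{n}\cdot 0 + \tfrac{1}{n}\cdot n$ one gets $f(n) = f(0) + n\bigl(f(1)-f(0)\bigr)$, which stays in $[0,1]$ for all $n$ only if $f(1)=f(0)$), so $\mbfI^{\mathbb{R}}$ contains only constant maps and $\widetilde{\eta}_{\mathbb{R}}$ collapses all of $\mathbb{R}$ to the single functional $\overline{c}\mapsto c$. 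Your worry that the naive $\two$-to-$\mbfI$ lift fails to be affine was exactly on target; no alternative lift exists.

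The paper's own argument has the same defect in disguise. It invokes $ev_m \colon \mbfI^{\mbfI^A} \to \mbfI$ for the separating map $m\colon A \to \mathbb{R}_\infty$, but $m \notin \mbfI^A$, so $ev_m$ is ill-typed on $\mbfI^{\mbfI^A}$; the computation $ev_{a_i}(\hat m) = m(a_i)$ quietly presupposes an affine map $A \to \mbfI$ separating $a_1,a_2$, which is precisely what was to be produced. Your instinct that ``this is where the real work lies'' was correct---the work cannot be done in general, and the Corollary should be read in light of the paper's own editorial caveats about errors.
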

We use the notation $\widetilde{\eta}_A$ to distinguish this map from the unit of the Giry monad, $\eta$.  The maps $\widetilde{\eta}_A$ form the components of a natural transformation $\widetilde{\eta}$, which is the unit of the (well known) double dualization monad into $\mbfI$.
\begin{proof}
Let $a_1, a_2$ be two distinct points of $A$.  We must show there exist a map $A \stackrel{\hat{m}}{\longrightarrow} \mbfI$ which separates the pair $\{ev_{a_1}, ev_{a_2}\}$,
\be \nonumber
ev_{a_1}(\ulcorner \hat{m} \urcorner) = \hat{m}(a_1) \ne \hat{m}(a_2) = ev_{a_2}(\ulcorner \hat{m} \urcorner).
\ee
Since $\mathbb{R}_{\infty}$ is a coseparator, let $m$ be the affine map $A \stackrel{m}{\longrightarrow} \mathbb{R}_{\infty}$ which coseparates $a_1$ and $a_2$.  Thus by the preceding constructions, the affine map
$\chi_{m^{-1}([m(a_2), \infty])}$ satisfies the property that
\be \nonumber
\chi_{m^{-1}([m(a_2), \infty])}(a_1)=0 \quad \quad \textrm{ while }  \quad \quad \chi_{m^{-1}([m(a_2), \infty])}(a_2)=1.
\ee
and we have the commutative diagram
\begin{equation}   \nonumber
 \begin{tikzpicture}[baseline=(current bounding box.center)]
 
  \node     (A) at  (0,0)  {$A$};
  \node    (IIA)  at  (0, 2)  {$\mbfI^{\mbfI^A}$};
  \node    (III)  at   (6, 2)  {$\mbfI$}; 
   \node   (I)  at   (6,0)   {$\two$};
   
  \draw[->,below] (A) to node {$\chi_{m^{-1}([m(a_2),\infty])}$} (I);
  \draw[>->,left] (A) to node {$\widetilde{\eta}_A$} (IIA);
  \draw[->,above] (IIA) to node {$ev_{m}$} (III);
  \draw[->>,right] (III) to node {$\epsilon_{\two}$} (I);
  \draw[->,above,dashed] (A) to node [xshift=-10pt]{$ev_m \circ \eta_A$} (III);
  
   \end{tikzpicture}
 \end{equation}
\noindent
The composite map $ev_m \circ \widetilde{\eta}_A$ is a ``lifting'' of the characteristic map $\chi_{m^{-1}([m(a_2), \infty])}$.  Taking $\hat{m} = ev_m \circ \widetilde{\eta}_A$ we obtain
\be \nonumber
\begin{array}{lcl}
ev_{a_1}(\ulcorner ev_m \circ \widetilde{\eta}_A \urcorner) &=& ev_m(\ulcorner \widetilde{\eta}_A(a_1) \urcorner) \\
&=& ev_m( \ulcorner ev_{a_1} \urcorner) \\
&=& m(a_1) 
\end{array}
\ee
Similarly, $ev_{a_2}(\ulcorner ev_m \circ \widetilde{\eta}_A \urcorner) = m(a_2)$.  Since $m(a_1) \ne m(a_2)$ the map $\hat{m} = ev_m \circ \widetilde{\eta}_A$ separates the pair $\{ev_{a_1}, ev_{a_2}\}$, i.e., proves they are distinct maps.

\end{proof}

\section{The codensity of $\I$ in $\Cvx$} \label{codensity}

The convex space  $\I$ is codense (right-adequate) in $\Cvx$ when  the restricted dual Yoneda embedding \mbox{$\Cvx^{op} \hookrightarrow \Cvx^{\I}$} is full and faithful.  The property of being  faithful implies that the double dualization map into $\mbfI$ is monic, which is the preceding corollary.  Hence it only remains to show that the restricted dual Yoneda embedding is full.

The property of being full requires that any affine map
 $\mbfI^A \stackrel{P}{\longrightarrow} \mbfI$ which makes the diagram 
 
\begin{equation}   \nonumber
 \begin{tikzpicture}[baseline=(current bounding box.center)]
 
  \node     (IA) at  (0,0)  {$\mbfI^A$};
  \node      (I1) at  (3,0)  {$\mbfI$};
  \node     (IA2)  at  (0, -2) {$\mbfI^A$};
  \node     (I2)  at   (3, -2)   {$\mbfI$};
  \draw[->,above] (IA) to node {$P$} (I1);
  \draw[->,left] (IA) to node {$\langle s,t \rangle^A$} (IA2);
  \draw[->,below] (IA2) to node {$P$} (I2);
  \draw[->,right] (I1) to node {$\langle s, t \rangle$} (I2);
  
   \node   (m)  at   (5,0)   {$m$};
  \node    (stm)   at   (5, -2)   {$\langle s,t \rangle \circ m$};
  \node    (Pstm) at   (7.6, -2)  {$P(\langle s,t \rangle \circ m)$};
  \node    (Pm)    at    (10, -0)  {$P(m)$};
  \node     (p)      at    (10, -2)   {$=s P(m) +t$};
    
   \draw[|->] (m) to node {} (Pm);
   \draw[|->] (Pm) to node {} (p);
   \draw[|->] (m) to node {} (stm);
   \draw[|->] (stm) to node {} (Pstm);
 \end{tikzpicture}
 \end{equation}
\noindent
commute, implies that 
their exist some $a \in A$ such that $P = ev_a$.\footnote{ Since the terminal object $1$ is a separator in $\Cvx$, and the category $\I$ consist of a single object, it suffice to show the affine maps $\mbfI^A \rightarrow \mbfI^1$ correspond to a map $1 \rightarrow A$, hence a point of $A$.}

\begin{lemma} \label{wave} The commutativity condition holds if and only if  $P$ is a weakly averaging map.
\end{lemma}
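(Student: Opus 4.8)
The plan is to reduce the whole commutativity condition to the behaviour of $P$ on constant functions. Throughout, for $c\in\mbfI$ write $k_c\in\mbfI^A$ for the constant affine map $a\mapsto c$; recall that $P$ being \emph{weakly averaging} means $P(k_0)=0$ and $P(k_1)=1$, which, since $P$ is affine and $k_c=k_0+_c k_1$, is equivalent to $P(k_c)=c$ for every $c\in\mbfI$.

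For the ($\Rightarrow$) direction I would simply specialise the hypothesis. Fix any $m\in\mbfI^A$ and, for $c\in\mbfI$, apply the commuting square to the endomorphism $\langle 0,c\rangle\in\Cvx(\mbfI,\mbfI)$ (admissible because $0\le 0+c\le 1$). Since $\langle 0,c\rangle\circ m=k_c$ and $\langle 0,c\rangle(P(m))=0\cdot P(m)+c=c$, commutativity forces $P(k_c)=c$; hence $P$ is weakly averaging.

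For the ($\Leftarrow$) direction, assume $P(k_0)=0$ and $P(k_1)=1$. The idea is to express $\langle s,t\rangle\circ m$, for admissible $\langle s,t\rangle$ (so $s\in[-1,1]$, $t\in[0,1]$, $0\le s+t\le 1$) and arbitrary $m\in\mbfI^A$, as a finite convex combination in the convex space $\mbfI^A$ of $m$ (or its complement), $k_0$ and $k_1$, and then push the affine map $P$ through it. When $s\ge 0$, the constraints $t\ge 0$ and $s+t\le 1$ make $s\,m+t\,k_1+(1-s-t)\,k_0$ a genuine convex combination, and one checks pointwise that it equals $\langle s,t\rangle\circ m$; applying $P$ and using $P(k_1)=1$, $P(k_0)=0$ gives $P(\langle s,t\rangle\circ m)=sP(m)+t$. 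When $s<0$, I would first record the elementary fact that the complement $m^{c}:=k_1-m$ is again an affine map into $\mbfI$, hence an element of $\mbfI^A$, and that $m+_{1/2}m^{c}=k_{1/2}$; applying $P$ and using $P(k_{1/2})=\frac12$ yields $P(m^{c})=1-P(m)$. Then, since $-s\in(0,1]$, $s+t\ge 0$ and $1-t\ge 0$, the combination $(-s)\,m^{c}+(s+t)\,k_1+(1-t)\,k_0$ is convex and equals $\langle s,t\rangle\circ m$ pointwise; applying $P$ gives $(-s)\bigl(1-P(m)\bigr)+(s+t)=sP(m)+t$, as desired.

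The only part that is more than bookkeeping is the $s<0$ case, and even there the single nontrivial input is the identity $P(m^{c})=1-P(m)$, obtained from the $+_{1/2}$ decomposition above; I do not anticipate a genuine obstacle, since once the admissibility inequalities on $\langle s,t\rangle$ are used to certify that the displayed coefficients are nonnegative and sum to one, the conclusion is immediate from affineness of $P$ together with $P(k_0)=0$ and $P(k_1)=1$. A more uniform alternative would be to use the factorisation $\langle s,t\rangle=\langle 1,t\rangle\circ\langle s,0\rangle$ quoted earlier for $s\ge0$, together with the flip $\langle -1,1\rangle$ for negative scalings, handling translations and the flip separately; but the direct three-term decomposition avoids juggling partially-defined maps.
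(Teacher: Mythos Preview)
Your argument is correct. The $(\Rightarrow)$ direction is the same specialisation as the paper's (take $s=0$). For $(\Leftarrow)$ your route differs in two ways. First, the paper proceeds in two stages: it records $P(s\cdot m)=sP(m)$ from $s\cdot m = m+_{1-s}\overline{0}$ and then writes $sm+t=\tfrac{s}{1-t}m+_t\overline{1}$; you instead use a single three-term convex decomposition $sm+tk_1+(1-s-t)k_0$, which is arguably cleaner and avoids the apparent division by $1-t$. Second, and more substantively, the paper's computation is only carried out for $s\in\mbfI$ (both displayed identities require $s\ge 0$), whereas the endomorphisms $\langle s,t\rangle$ of $\mbfI$ allow $s\in[-1,1]$; your treatment of the $s<0$ case via the complement $m^{c}$ and the identity $P(m^{c})=1-P(m)$ actually fills a gap the paper leaves open. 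The trade-off is that your proof is a little longer, but it is the one that matches the full range of $\langle s,t\rangle$ declared earlier in the paper.
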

\begin{proof}
If the affine map  $\mbfI^A \stackrel{P}{\longrightarrow} \mbfI$ is weakly averaging then $P(\overline{0})=0$ and $P(\overline{1})=1$, hence it satisfies the property 
\be \nonumber
P(s \cdot m) = P(s \cdot m + (1-s) \cdot \overline{0}) = P(m) +_{1-s} P(\overline{0}) = sP(m)
\ee
for all $m \in \mbfI^A$ and all $s \in \mbfI$, as well as  $P(\overline{t})=P(\overline{0}+_{t} \overline{1}) = t$.  Consequently, for $0 \le s+t \le 1$, it follows
\be \nonumber
P(s\cdot m + t) = P( \frac{s}{1-t} m +_{t} \overline{1}) = s P(m) + t.
\ee

Conversely,  if the  equation
\be \nonumber
P(s m + t) = s P(m) + t \quad \quad \textrm{ for all } \langle s, t \rangle \in \Cvx(\mbfI,\mbfI)
\ee
holds then $P$ is weakly averaging as the equation must hold true for $s=0$ and arbitrary $t \in \mbfI$.
\end{proof}

   Denote the subobject of $\Cvx(\mbfI^A, \mbfI)$ consisting of the weakly averaging functionals by 
   $\mbfI^{\mbfI^A}|_{wa}$.  Thus  $\mbfI^{\mbfI^A}|_{wa} \hookrightarrow \Cvx(\mbfI^A, \mbfI)$, and since every 
   evaluation map $ev_a$ is a weakly averaging functional, the double dualization map $\widetilde{\eta}_A$ defined in Corollary (\ref{ddMap}), decomposes as
\begin{equation}  \nonumber
\begin{tikzpicture}
  \node  (A) at  (0,0)  {$A$};
  \node   (IIA)   at   (4,0)  {$\mbfI^{\mbfI^{A}}|_{wa}$};
  \node   (IA)   at    (4, -2) {$\mbfI^{\mbfI^A}$};
  \draw[>->, above] (A) to node {$\hat{\eta}_A$} (IIA);
  \draw[>->, right] (IIA) to node {$\iota$} (IA);
  \draw[>->,below] (A) to node {$\widetilde{\eta}_A$} (IA);
  \end{tikzpicture}
  \end{equation} 
\noindent
Because the category $\Cvx$ is a regular category (it is an equational theory), each affine map has a regular-epi mono factorization. Since $\widetilde{\eta}_A$ is monic, its factorization 
\be \nonumber
\widetilde{\eta}_A = j \circ q
\ee
makes the map $q$ both  regular-epi and monic.  \emph{Editorial note: The following result is false. The truth of this fact follows from the development given in the updated version, referred to at the outset of the paper.  Namely, it is necessary to show $\two^{\two^{\Sigma A}}|_{wa} \cong \Sigma A$, and then proceed to ``extend the evaluation map $ev_a$, corresponding to $\two^{\Sigma  A} \stackrel{\chi_{P^{-1}}}{\longrightarrow} \two$, back to $\mbfI^{\mbfI^A}$ (restricted to affine maps)}. But a  regular-epi mono map is an isomorphism in any category, and so we conclude $q$ is an isomorphism.  Thus we have the commutative square

\begin{equation}  \nonumber
\begin{tikzpicture}
  \node  (A) at  (0,0)  {$A$};
  \node  (cA) at   (0, -2) {$Im(\widetilde{\eta}_A)$};
  \node   (IIA)   at   (4,0)  {$\mbfI^{\mbfI^{A}}|_{wa}$};
  \node   (IA)   at    (4, -2) {$\mbfI^{\mbfI^A}$};
  \draw[>->, above] (A) to node {$\hat{\eta}_A$} (IIA);
  \draw[>->, right] (IIA) to node {$\iota$} (IA);
\draw[>->,below,dashed] (IIA) to node {$k$} (cA); 
 \draw[>->>,left] (A) to node {$q$} (cA);
 \draw[>->,below] (cA) to node {$j$} (IA);
  \end{tikzpicture}
  \end{equation} 
\noindent
and, since $Im(\widetilde{\eta}_A)$ is the largest subobject through which $\widetilde{\eta}_A$ factors, it follows there exist a unique map $k$ making the whole diagram commute.  Since $q$ is an isomorphism and $q = k \circ \hat{\eta}_A$, the map $r_A \stackrel{def}{=} q^{-1} \circ k$ is a retraction map for $\hat{\eta}_A$,
\be \nonumber
r_A \circ \hat{\eta}_A = id_A.
\ee

On the otherhand, the image $Im(\widetilde{\eta}_A)$ is the convex hull of all the evaluation maps $\{ev_a\}_{a \in A}$, which is the smallest convex space containing the image.  Since each $ev_a$ is weakly averaging,  it follows that $k$ itself is an isomorphism, and the coimage map is, up to isomorphism, just the double dualization map $\hat{\eta}_A$ having the codomain space as the weakly averaging functionals. 

This result yields 

\begin{lemma}  \label{mainRes} In $\Cvx$, every weakly averaging affine map $\mbfI^A \stackrel{P}{\longrightarrow} \mbfI$ is an evaluation map.
\end{lemma}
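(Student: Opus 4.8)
The plan is to obtain the statement as the \emph{fullness} half of the codensity of $\I$. By Lemma~\ref{wave}, an affine map $\mbfI^{A}\to\mbfI$ is weakly averaging exactly when it makes the square against every endomorphism $\langle s,t\rangle\in\Cvx(\mbfI,\mbfI)$ commute, and such a datum is precisely the (single) component of a morphism $\Cvx(A,\cdot)|_{\I}\Rightarrow\Cvx(\one,\cdot)|_{\I}$ in $\Cvx^{\I}$: maps out of $\one$ are points, so $\Cvx(\one,\cdot)|_{\I}$ is the inclusion $\I\hookrightarrow\Cvx$, and naturality against $\langle s,t\rangle$ is that square. The dual Yoneda embedding sends a point $a\in A$ (a morphism $A\to\one$ in $\Cvx^{op}$) to the transformation with component $m\mapsto m\circ a=ev_{a}$. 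Hence the lemma is equivalent to the assertion that every such transformation is induced by a point, i.e.\ to fullness of $\Cvx^{op}\hookrightarrow\Cvx^{\I}$; faithfulness, i.e.\ injectivity of $a\mapsto ev_{a}$, is already Corollary~\ref{ddMap}. So the task is: given a weakly averaging affine $P$, \emph{produce} $a\in A$ with $P=ev_{a}$.

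The shortcut attempted above factors the monic $\widetilde{\eta}_{A}$ as a regular epi followed by a mono and tries to conclude. The content that is actually missing is the containment
\[
\mbfI^{\mbfI^{A}}|_{wa}\ \subseteq\ \{\,ev_{a}\ :\ a\in A\,\},
\]
the reverse inclusion being automatic (since $a\mapsto ev_{a}$ is affine, $\{ev_{a}:a\in A\}$ is already a convex subspace and equals $Im(\widetilde{\eta}_{A})$, hence is its own convex hull). This containment is the crux, and it is genuinely subtle: for ``incomplete'' convex spaces it can fail, for instance on $A=(0,1)$ the affine functional $m\mapsto\lim_{x\to0^{+}}m(x)$ is weakly averaging yet is not $ev_{x}$ for any $x\in(0,1)$. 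This is exactly the point flagged in the editorial note: one cannot simply ``factor the point out'' of $\widetilde{\eta}_{A}$; it must be built, and for that one needs more input than the bare regular-epi/mono factorization.

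The route I would take instead descends to the two-element coefficient object. First prove the Boolean analogue $\two^{\two^{\Sigma A}}|_{wa}\cong\Sigma A$. Using the affine quotient $\epsilon_{\two}\colon\mbfI\twoheadrightarrow\two$, a weakly averaging two-valued functional assigns a value in $\{0,1\}$ to each Boolean subobject of $A$; the Boolean subobjects form a $\pi$-system by Lemma~\ref{piSystem}, and a $\pi$-$\lambda$ (Dynkin) argument together with the separatedness of $\Sigma A$ established in \S\ref{separated} and the incidence relations of Lemma~\ref{Lemma1} force this assignment to be concentrated at a single $a\in A$---and the pathology above is absent here precisely because the relevant completeness is built into the $\sigma$-algebra. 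One then lifts this Boolean evaluation back to a real-valued one: by the adequacy of $\I$ in $\Cvx$ (\S\ref{density}) an affine functional $\mbfI^{A}\to\mbfI$ is determined by its restrictions along all path maps $\mbfI\to A$, and along each such path the weak-averaging property together with the already-located point pins $P$ down to $ev_{a}$. The main obstacle, and the only substantive work, is this Boolean identification $\two^{\two^{\Sigma A}}|_{wa}\cong\Sigma A$ and the subsequent lift; once those are in place, what remains is bookkeeping with the factorization of $\widetilde{\eta}_{A}$ already constructed.
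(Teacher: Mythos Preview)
Your analysis is sharper than the paper's own argument. You correctly identify that the reasoning preceding the lemma---factoring the monic $\widetilde{\eta}_A$ as regular-epi followed by mono and then conjuring a map $k\colon\mbfI^{\mbfI^A}|_{wa}\to Im(\widetilde{\eta}_A)$ from the ``universal property of the image''---is circular: the existence of such a $k$ is precisely the containment $\mbfI^{\mbfI^A}|_{wa}\subseteq\{ev_a:a\in A\}$ that must be proved, and the regular-epi/mono factorization supplies no such map. The paper's own editorial note concedes this, and the short proof of Lemma~\ref{mainRes} simply invokes that flawed identification (``the convex space of weakly averaging affine maps is the convex hull of the evaluation points'') as its first line.

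Your counterexample $A=(0,1)$ with $P(m)=\lim_{x\to 0^+}m(x)$ goes further than the editorial note and is decisive: $P$ is affine and weakly averaging but is not $ev_a$ for any $a\in(0,1)$, so the lemma as stated is \emph{false} for general objects of $\Cvx$. The editorial note flags the argument; your example kills the statement.

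Consequently the route you sketch---first establish $\two^{\two^{\Sigma A}}|_{wa}\cong\Sigma A$, then lift along $\epsilon_{\two}$---which is exactly what the editorial note points toward, cannot rescue the lemma \emph{as written}, and you seem half-aware of this when you say the pathology is ``absent here'' only at the Boolean stage. On $A=(0,1)$ the lift step must fail: whatever point the Boolean step locates sits on the boundary, outside $A$. A repaired statement would need a completeness hypothesis on $A$, or a restriction to the convex spaces actually needed downstream; your outline is the right shape for such a repair, but it is not, and cannot be, a proof of the lemma as stated---and neither is the paper's.
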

\begin{proof}
The convex space of weakly averaging affine maps, $\mbfI^{\mbfI^A}|_{wa}$, is (up to isomorphism) the convex hull of the set of evaluation points, $\{ev_a\}_{a \in A}$.  Hence every weakly averaging affine map
$\mbfI^A \stackrel{P}{\longrightarrow} \mbfI$ is given by 
\be \nonumber
\begin{array}{lcl}
P &=& \sum_{i=1}^n \alpha_i ev_{a_i} \\
&=&  ev_{\sum_{i=1}^n \alpha_i a_i}
\end{array}
\ee
where the first equation follows since the convex hull of the set $\{ev_a\}_{a \in A}$ consist of all convex sums of such points.    The second equality follows from the fact the map $\hat{\eta}_A$ is affine.  All told,  $P$ is an evaluation map at some point $a \in A$.
\end{proof}

\begin{thm} The convex subspace $\I$ is codense in $\Cvx$. 
\end{thm}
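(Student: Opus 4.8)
The plan is to establish codensity of $\I$ in $\Cvx$ by unwinding the definition: the restricted dual Yoneda embedding $\Cvx^{op} \hookrightarrow \Cvx^{\I}$ must be full and faithful. Faithfulness is already dispatched by Corollary \ref{ddMap}, which says the double dualization map $\widetilde{\eta}_A$ is monic. So the remaining content is \emph{fullness}. First I would reduce fullness to a concrete statement about affine functionals. A natural transformation between the functors represented by $A$ and $B$ in $\Cvx^{\I}$ amounts, by Yoneda-type reasoning together with the fact that $\I$ consists of the single object $\mbfI$ with all its affine endomorphisms $\langle s,t\rangle$, to an affine map $P \colon \mbfI^A \to \mbfI^B$ that commutes with post-composition by every $\langle s,t\rangle$. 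Because $\one$ is a separator in $\Cvx$ and $\I$ is a one-object category, it suffices to treat the case $B = \one$, i.e. to show that every affine $P \colon \mbfI^A \to \mbfI$ commuting with all the transformations $\langle s,t\rangle$ in the sense of the square in Lemma \ref{wave} is induced by a morphism $\one \to A$, that is, by a point $a \in A$, as $P = ev_a$.

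The key steps, in order, are then: (1) invoke Lemma \ref{wave} to replace the commutativity-with-$\langle s,t\rangle$ condition by the statement that $P$ is weakly averaging; (2) invoke Lemma \ref{mainRes}, which asserts precisely that every weakly averaging affine map $\mbfI^A \to \mbfI$ is an evaluation map $ev_a$; (3) check naturality, i.e. that the assignment $a \mapsto ev_a$ is exactly the restricted dual Yoneda embedding on objects and is compatible with morphisms, so that the functor is essentially surjective on hom-sets in the required way; and (4) combine with faithfulness from Corollary \ref{ddMap} to conclude the embedding is full and faithful, hence $\I$ is codense. Most of the real work has been front-loaded into Lemmas \ref{wave} and \ref{mainRes} and Corollary \ref{ddMap}, so at this stage the theorem is essentially an assembly of those pieces together with the reduction via the separator $\one$.

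The step I expect to be the genuine obstacle — and where, as the editorial note flags, the original argument has a gap — is the chain of reasoning underlying Lemma \ref{mainRes}: the claim that the image of $\widetilde{\eta}_A$ (the convex hull of the evaluation maps) already exhausts all weakly averaging affine functionals on $\mbfI^A$. The delicate point is that a weakly averaging affine functional need not a priori be a \emph{finite} convex combination of evaluations; controlling it requires the adequacy/codensity machinery itself, so there is a real risk of circularity, and one must instead argue via the separated measurable space $\Sigma A$ and extend the Boolean-valued evaluation $\chi_{P^{-1}}$ on $\two^{\Sigma A}$ back along $\mbfI^{\mbfI^A}$, as the editorial note indicates is done correctly in the revised paper. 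For the purposes of this write-up I would present the assembly cleanly, cite Lemma \ref{mainRes} as the crux, and note that its proof is where the subtlety lives.

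\begin{proof}
Faithfulness of the restricted dual Yoneda embedding $\Cvx^{op} \hookrightarrow \Cvx^{\I}$ is exactly the assertion that $\widetilde{\eta}_A$ is monic for every convex space $A$, which is Corollary \ref{ddMap}. It therefore remains to prove fullness.

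Since $\I$ consists of the single object $\mbfI$ together with all affine endomorphisms $\langle s,t\rangle \in \Cvx(\mbfI,\mbfI)$, a morphism in $\Cvx^{\I}$ from the functor represented by $A$ to the functor represented by $B$ is an affine map $\mbfI^A \stackrel{P}{\longrightarrow} \mbfI^B$ rendering the square
\begin{equation} \nonumber
\begin{tikzpicture}[baseline=(current bounding box.center)]
  \node (IA) at (0,0) {$\mbfI^A$};
  \node (IB) at (3,0) {$\mbfI^B$};
  \node (IA2) at (0,-2) {$\mbfI^A$};
  \node (IB2) at (3,-2) {$\mbfI^B$};
  \draw[->,above] (IA) to node {$P$} (IB);
  \draw[->,left] (IA) to node {$\langle s,t\rangle^A$} (IA2);
  \draw[->,below] (IA2) to node {$P$} (IB2);
  \draw[->,right] (IB) to node {$\langle s,t\rangle^B$} (IB2);
\end{tikzpicture}
\end{equation}
commutative for all $\langle s,t\rangle$. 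Because the terminal object $\one$ is a separator in $\Cvx$ and $\I$ has a single object, it suffices to treat the case $B = \one$: we must show that every affine map $\mbfI^A \stackrel{P}{\longrightarrow} \mbfI$ satisfying this commutativity condition is of the form $ev_a$ for some $a \in A$, since such evaluation maps are precisely the images under the restricted dual Yoneda embedding of the morphisms $\one \rightarrow A$.

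By Lemma \ref{wave}, the commutativity condition on $P$ holds if and only if $P$ is weakly averaging. By Lemma \ref{mainRes}, every weakly averaging affine map $\mbfI^A \stackrel{P}{\longrightarrow} \mbfI$ is an evaluation map $ev_a$ at some point $a \in A$. Conversely each $ev_a$ is weakly averaging and hence satisfies the commutativity condition. Thus the restricted dual Yoneda embedding is full, and combined with faithfulness it is full and faithful; equivalently, $\I$ is codense in $\Cvx$.
\end{proof}
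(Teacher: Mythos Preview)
Your proposal is correct and follows essentially the same route as the paper's own proof: reduce codensity to full-and-faithfulness of the restricted dual Yoneda embedding, obtain faithfulness from the monicity of $\widetilde{\eta}_A$ (Corollary~\ref{ddMap}), use the separator $\one$ to reduce fullness to the case $B=\one$, then invoke Lemma~\ref{wave} to identify the relevant natural transformations with weakly averaging functionals and Lemma~\ref{mainRes} to conclude each is an evaluation map. Your explicit acknowledgment that the real burden lies in Lemma~\ref{mainRes}, and that its argument as stated is where the gap flagged in the editorial note resides, is accurate and matches the paper's own caveat.
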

\begin{proof} 
To prove the codensity of $\I$ in $\Cvx$ requires showing  the restricted dual Yoneda functor  $\Cvx^{op} \rightarrow \Cvx^{\I}$ is full and faithful.  Since the object $1$ is a separator,  it suffices to show that every natural transformation $\Cvx(\cdot, A) \stackrel{P}{\longrightarrow} \Cvx(\cdot, 1)$ arises from a point $1 \stackrel{a}{\longrightarrow} A$, so that the natural transformation is $\Cvx(\cdot, a)$, and that this point be unique, i.e., $\Cvx(\cdot, a_1) = \Cvx(\cdot,a_2)$ implies $a_1=a_2$.   The natural transformations given by $\Cvx(\cdot,a)$, evaluated at the only object in $\I$, are the evaluation maps.  Since the double dualization map $\hat{\eta}_A$ ($\widetilde{\eta}_A$), which is the unit of the double dualization monad into $\mbfI$, evaluated at the component $A$, is monic it follows that the restricted dual Yoneda functor is faithful.  The property of being full follows from Lemma \ref{mainRes} since, by Lemma \ref{wave}, every natural transformation $P$ is necessarily a weakly averaging affine functional.

\end{proof}

\section{Codensity of $\I$ in $\Cvx$ via Isbell Duality}  \label{altView}

Since $\Cvx$ is a SMCC which is complete and cocomplete, and  $\I$ is a dense subcategory of $\Cvx$, we can apply  the Isbell duality theorem,
\begin{equation}   \nonumber
 \begin{tikzpicture}[baseline=(current bounding box.center)]
 
  \node     (CI) at  (0,0)  {$\Cvx^{\I^{op}}$};
   \node   (IC)  at   (5,0)   {$(\Cvx^{\I})^{op}$};
  
  \draw[->,above] ([yshift=2pt] CI.east) to node {$\mathcal{O}$} ([yshift=2pt] IC.west);
   \draw[->,below] ([yshift=-2pt] IC.west) to node {$\mathbf{Spec}$} ([yshift=-2pt] CI.east);

  \node    (d) at   (8, 0)  {$\mathcal{O} \dashv \mathbf{Spec}$};
  
   \end{tikzpicture}
 \end{equation}
\noindent
where
\be \nonumber
\begin{array}{l}
\bigg( \mathbf{Spec}(A)\bigg)[\mbfI] = (\Cvx^{\I})^{op})(\Cvx(\mbfI, \cdot), A) \\
\quad \quad \quad \quad \quad \quad \textrm{ and } \quad \\
 \bigg( \mathcal{O}(\mathcal{F})\bigg)[\mbfI] = (\Cvx^{\I^{op}})(\mathcal{F}, \Cvx(\cdot, \mbfI))
 \end{array}.
\ee
The restricted Yoneda embedding 
\be \nonumber
\begin{array}{ccc}
\Cvx & \stackrel{Y|}{\longrightarrow} & \Cvx^{\I^{op}} \\
A &\mapsto& \hat{A}
\end{array}
\ee
which is full and faithful (since $\I$ is adequate), gives the representable functors \mbox{$\hat{A} = \Cvx(\cdot, A) \in_{ob} \Cvx^{\I^{op}}$}, and it follows that 
\be \nonumber
\begin{array}{lcl}
 \mathbf{Spec}(\mathcal{O}(\hat{A}))[\mbfI] &=& \mathbf{Spec}(\mathcal{O}(\Cvx(\cdot, A)))[\mbfI]  \\
 &\cong & \mathbf{Spec}( \Cvx(A, \cdot))[\mbfI] \\
 &=& (\Cvx^{\I})^{op}(\Cvx(I, \cdot), \Cvx(A, \cdot) ) \\
 &=& \Cvx(\cdot, A)[\mbfI] \\
 &=& \hat{A}[\mbfI]
 \end{array}
\ee
hence  the composite functor map
\be \nonumber
\hat{A} \mapsto  \mathbf{Spec}(\mathcal{O}(\hat{A}))
\ee
is an isomorphism for every convex space $A$.

 Since the restricted Yoneda embedding \mbox{$\Cvx \stackrel{Y|}{\hookrightarrow} \Cvx^{\I^{op}}$} is full and faithful, it follows  the composite functor \mbox{$\Cvx \stackrel{\mathcal{O} \circ Y|}{\longrightarrow} (\Cvx^{\I})^{op}$} is also full and faithful since $\mathcal{O}(\Cvx(\cdot,A)) = \Cvx(A, \cdot)$.  This implies that  the dual Yoneda embedding \mbox{$\Cvx^{op} \stackrel{\hat{Y}|}{\longrightarrow} \Cvx^{\I}$} is full and faithful - hence $\I$ is a right adequate subcategory of $\Cvx$.\footnote{This result is given in Isbell, using our notation, as 
\begin{thm} A proper left adequate subcategory is a proper right adequate subcategory if and only if all the representable functors 
\be \nonumber
Cvx(\cdot, A) \in \Cvx^{\I^{op}}
\ee
\noindent
are reflexive, for all $A \in \Cvx$.
\end{thm}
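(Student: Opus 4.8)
The statement is Isbell's criterion for when a dense (left adequate) subcategory is additionally codense (right adequate), and I would prove it by restricting the Isbell conjugacy adjunction $\mathcal{O}\dashv\mathbf{Spec}$ to the subcategories of representable and co-representable functors, and then invoking the standard fact that a left adjoint is full and faithful exactly when the unit of its adjunction is a natural isomorphism. Throughout, the hypothesis that $\I$ is (proper) left adequate is used to identify, via the full and faithful restricted Yoneda embedding $Y| : \Cvx \to \Cvx^{\I^{op}}$, the category $\Cvx$ with the replete full subcategory $\mathrm{Rep}\subseteq\Cvx^{\I^{op}}$ of representables; the word "proper" I read as the size/legitimacy hypothesis that makes $\Cvx^{\I^{op}}$, $\Cvx^{\I}$ and the conjugation adjunction well defined (and, in Isbell's formulation, that $\I$ is a small full subcategory, which it is here), so that "proper" is a property shared by both the left and right adequate versions and the theorem reduces, once left adequacy is fixed, to: $\I$ is right adequate $\iff$ every representable $\Cvx(\cdot,A)$ is reflexive.

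First I would set up the reformulation. By definition, codensity of $\I$ asserts that the dual restricted Yoneda embedding $\hat{Y}| : \Cvx^{op}\to\Cvx^{\I}$ is full and faithful, and the computation already carried out in this section, namely $\mathcal{O}(\Cvx(\cdot,A)) = \Cvx(A,\cdot)$, shows that $\hat{Y}|$ agrees, up to the identifications $\mathrm{Rep}\simeq\Cvx$ and $\mathrm{Corep}\simeq\Cvx^{op}$, with the restriction $\mathcal{O}|_{\mathrm{Rep}}:\mathrm{Rep}\to\mathrm{Corep}$, where $\mathrm{Corep}\subseteq(\Cvx^{\I})^{op}$ is the replete full subcategory of co-representables. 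Thus codensity of $\I$ is exactly the statement that $\mathcal{O}|_{\mathrm{Rep}}$ is full and faithful; moreover its faithfulness is already available independently, being precisely Corollary~\ref{ddMap} (the double dualization map into $\mbfI$ is monic), so the real content is the fullness of $\mathcal{O}|_{\mathrm{Rep}}$.

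Next, the chain of isomorphisms displayed earlier gives $\mathbf{Spec}(\mathcal{O}(\hat A))\cong\hat A$ for every $A$, so $\mathbf{Spec}$ carries $\mathrm{Corep}$ back into $\mathrm{Rep}$; since $\mathcal{O}$ carries $\mathrm{Rep}$ into $\mathrm{Corep}$ by construction, the adjunction $\mathcal{O}\dashv\mathbf{Spec}$ restricts to an adjunction $\mathcal{O}|_{\mathrm{Rep}}\dashv\mathbf{Spec}|_{\mathrm{Corep}}$ whose unit is the restriction of the Isbell unit $\eta$, with component at $\hat A$ the canonical map $\eta_{\hat A} : \hat A\to\mathbf{Spec}(\mathcal{O}(\hat A))$. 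By the standard criterion that a left adjoint is full and faithful precisely when its unit is a natural isomorphism, $\mathcal{O}|_{\mathrm{Rep}}$ is full and faithful iff $\eta_{\hat A}$ is an isomorphism for every $A$, i.e. iff every representable $\Cvx(\cdot,A)$ is reflexive. Combining this with the previous paragraph, where faithfulness of $\mathcal{O}|_{\mathrm{Rep}}$ is already secured, yields the equivalence: $\I$ is right adequate $\iff$ $\mathcal{O}|_{\mathrm{Rep}}$ is full $\iff$ $\eta_{\hat A}$ is an isomorphism for all $A$ $\iff$ all representables are reflexive.

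The main obstacle I anticipate is in the third step: verifying carefully that the abstractly-produced pointwise isomorphism $\mathbf{Spec}(\mathcal{O}(\hat A))\cong\hat A$ is natural in $A$, compatible with the $\I$-actions, and coherent enough for the restriction of the full conjugation adjunction to $\mathrm{Rep}$ and $\mathrm{Corep}$ to be a genuine adjunction with unit the restricted $\eta$ — in effect one must check that $\mathbf{Spec}|_{\mathrm{Corep}}$ really is an honest right adjoint, not merely an object-level inverse. A secondary but persistent bookkeeping hazard is keeping the three layers of variance straight — $\Cvx$ versus $\Cvx^{op}$ in the passage from densities to representables, $\Cvx^{\I^{op}}$ versus $(\Cvx^{\I})^{op}$ in the targets of $\mathcal{O}$ and $\mathbf{Spec}$, and the op hidden inside $\mathrm{Corep}$ — so that the assertion "$\mathcal{O}|_{\mathrm{Rep}}$ is full" is correctly matched with "$\hat{Y}|$ is full", and the unit $\eta_{\hat A}$ is identified with the double dualization map whose injectivity is Corollary~\ref{ddMap}.
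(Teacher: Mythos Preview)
The paper does not actually prove this theorem: its entire proof reads ``\cite[Theorem 1.5]{Isbell} (Given without proof.)'', i.e.\ it defers entirely to Isbell's original paper. So there is nothing to compare your argument against here; you have supplied substantially more than the paper does.

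That said, your sketch is the right one and is essentially how Isbell's result is usually proved. One comment: you frame the argument as ``restrict the adjunction $\mathcal{O}\dashv\mathbf{Spec}$ to $\mathrm{Rep}$ and $\mathrm{Corep}$ and invoke the unit criterion'', and then flag as the main obstacle checking that this restriction is a genuine adjunction. You can sidestep that obstacle entirely. Work directly with the adjunction bijection
\[
(\Cvx^{\I})^{op}\big(\mathcal{O}(\hat A),\,\Cvx(B,\cdot)\big)\;\cong\;\Cvx^{\I^{op}}\big(\hat A,\,\mathbf{Spec}(\Cvx(B,\cdot))\big)
\]
and note that $\mathbf{Spec}(\Cvx(B,\cdot))=\mathbf{Spec}(\mathcal{O}(\hat B))$. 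Reflexivity of $\hat B$ says precisely that the unit $\hat B\to\mathbf{Spec}(\mathcal{O}(\hat B))$ is an isomorphism, so the right-hand side becomes $\Cvx^{\I^{op}}(\hat A,\hat B)\cong\Cvx(A,B)$ by left adequacy, and the left-hand side is the hom-set whose bijection with $\Cvx(A,B)$ is right adequacy. This gives the ``if'' direction without ever forming a restricted adjunction. The ``only if'' direction is similar: right adequacy gives the bijection on the left, left adequacy the one on the right, and one checks the resulting isomorphism $\hat B\cong\mathbf{Spec}(\mathcal{O}(\hat B))$ is the unit. Your variance bookkeeping worry is real but routine; the argument above keeps it to a minimum.
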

\begin{proof} \cite[Theorem 1.5]{Isbell}  (Given without proof.)
\end{proof}
The property of being proper is trivial since $\I$ consist of a single object.}

\section{Function Spaces as Positively Convex Spaces}  \label{functions}

Consider the convex space $\Cvx(A, \mbfI)$, with its convex structure determined pointwise by that of $\two$, so for all $\alpha \in (0,1)$, 
\be \nonumber
\begin{array}{lcl}
(\chi_{A_1} +_{\alpha} \chi_{A_2})(b) &\stackrel{def}{=}& \chi_{A_1}(b) +_{\alpha} \chi_{A_2}(b) \\
&=& \left\{ \begin{array}{ll} 1 & \textrm{iff }b \in A_1  \cap  A_2  \\ 0 & \textrm{otherwise} \end{array} \right.  
\end{array}
\ee
Thus, it is clear that for any finite convex sum of such elements in $\two^A$,  that we have
\be \nonumber
\displaystyle{ \sum_{i=1}^n} \alpha_i \chi_{A_i} = \chi_{ \cap_{i=1}^n A_i }  \quad \displaystyle{\sum_{i=1}^n} \alpha_i = 1,  \, \, \alpha_i \in (0,1) 
\ee
Moreover, since 
\be \nonumber
\bigcap_{i=1}^{\infty} A_i = \left\{\begin{array}{cl} A_k & \textrm{iff } \cap_{i=1}^{\infty} A_i \ne \emptyset \\
\emptyset & \textrm{otherwise }
\end{array} \right.
\ee
where, provided the intersection is nonempty, the element $A_k$ corresponds to one of the terms in set $\{A_i\}_{i=1}^{\infty}$.   Hence,  for every $b \in A$,
\be \nonumber
(\chi_{\cap_{i=1}^{\infty} A_i})(b)= \left\{ \begin{array}{ll} 1 & \textrm{iff }b \in A_i \textrm{ for all }i \\ 0 & \textrm{otherwise} \end{array} \right..  
\ee

Consequently, for every convex space $A$, the set $\Cvx(A, \two)$ has a superconvex  space structure given by  
\be \nonumber 
\displaystyle{ \sum_{i=1}^{\infty}} \alpha_i \chi_{ A_i }  \stackrel{def}{=}  \chi_{ \cap_{i=1}^{\infty} A_i }  \quad  \displaystyle{\lim_{n \rightarrow \infty}}  \left\{ \sum_{i=1}^n \alpha_i \right\}  = 1,  \, \, \alpha_i \in (0,1) 
\ee
with the nullary operation 
\be \nonumber
\begin{array}{lcl} 
\one & \rightarrow & \two^A \\
\star & \mapsto & \chi_{\emptyset}
\end{array}.
\ee

Similarly the convex space $\Cvx(A, \mbfI)$, with its convex structure determined pointwise by that of $\mbfI$, determines a superconvex space.  That is,  for all $b \in A$, infinite convex sums are defined by

\be \nonumber
\bigg(\displaystyle{ \sum_{i=1}^{\infty}} \alpha_i \chi_{A_i}\bigg)(b) \stackrel{def}{=}  \lim_{n \rightarrow \infty} \alpha_i \chi_{A_i}(b)  \quad \textrm{ where } \lim_{n \rightarrow \infty} \{\sum_{i=1}^n \alpha_i\} = 1,  \, \, \alpha_i \in (0,1) 
\ee
with the nullary operation 
\be \nonumber
\begin{array}{lcl} 
\one & \rightarrow & \mbfI^A \\
\star & \mapsto & \chi_{\emptyset}
\end{array}.
\ee

Using Theorem \ref{superconvex},  we will view these two superconvex spaces, $\two^A$ and $\mbfI^A$, as positively convex spaces.  Hence  we can replace the the condition of strict equality  $\lim_{n \rightarrow \infty} \{\sum_{i=1}^n \alpha_i\} = 1$ with  the condition of inequality $\lim_{n \rightarrow \infty} \{\sum_{i=1}^n \alpha_i\} \le 1$.  It is, of course, the zero element $\chi_{\emptyset}$ which allows us to add the additional term $(1- \lim_{n \rightarrow \infty} \sum_{i=1}^{n} \alpha_i)$ to obtain a superconvex space (given a positively convex space).

The property of being a superconvex (positively)  space works in tandem with the idea of measurable functions into $\mbfI$ being represented as limits of simple measurable functions, which themselves can be (re)written as convex sums.  These representations make explicit use of the zero elements of the function space $\mbfI^A$, as the proof of the following result shows. 

\begin{lemma}\label{wellDefined}  Every simple measurable function $\Sigma A \stackrel{m}{\longrightarrow} \mbfI$ can be written as a convex sum, $m = \sum_{i=1}^n \alpha_i \chi_{A_i}$ with $\sum_{i=1}^n \alpha_i =1$.
\end{lemma}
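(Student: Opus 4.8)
The plan is to reduce the statement to the standard ``layer cake'' decomposition of a simple function, and then to use the zero element $\chi_{\emptyset}$ of the function space $\mbfI^A$ to force the coefficients to sum to exactly $1$ rather than merely to $\le 1$. Throughout, $\chi_{A_i}$ denotes the ordinary indicator of a measurable subset $A_i$ of $\Sigma A$, and the convex combination $\sum_i \alpha_i \chi_{A_i}$ is computed pointwise, i.e.\ as an identity of measurable $\mbfI$-valued functions (in the measurable context $\mbfI^A = \M(\Sigma A,\mbfI)$, so such indicators do lie in $\mbfI^A$ even though they need not be affine).

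First I would enumerate the finitely many values taken by $m$ in strictly decreasing order, $1 \ge v_1 > v_2 > \cdots > v_k \ge 0$, and put $v_{k+1} := 0$. Since $m$ is measurable and singletons are Borel in $\mbfI$, the super-level sets $A_j := m^{-1}([v_j,1])$ lie in the $\sigma$-algebra of $\Sigma A$ and are nested, $A_1 \subseteq \cdots \subseteq A_k = A$ (the last equality because $v_k$ is the least value of $m$). Setting $\alpha_j := v_j - v_{j+1} \ge 0$, one checks that $m = \sum_{j=1}^k \alpha_j \chi_{A_j}$: if $m(b) = v_\ell$ then $b \in A_j$ exactly when $j \ge \ell$, so the sum at $b$ telescopes to $\sum_{j=\ell}^k (v_j - v_{j+1}) = v_\ell - v_{k+1} = v_\ell = m(b)$. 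The coefficients themselves telescope to $\sum_{j=1}^k \alpha_j = v_1 - v_{k+1} = v_1 \le 1$, so this is a genuine convex combination, but of total weight only $v_1$.

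To finish, I would adjoin one more term: take $\alpha_{k+1} := 1 - v_1 \ge 0$ and $A_{k+1} := \emptyset$, recalling that $\emptyset$ belongs to the generating set for $\Sigma A$ and that $\chi_{\emptyset}$ is the zero element of the positively convex space $\mbfI^A$, namely the constant function $0$. This leaves $m$ unchanged pointwise, so $m = \sum_{j=1}^{k+1} \alpha_j \chi_{A_j}$ with $\sum_{j=1}^{k+1} \alpha_j = v_1 + (1-v_1) = 1$, as required (with $n = k+1$, or $n = k$ in the exceptional case $v_1 = 1$, where no correction is needed). The only real obstacle -- and the reason the preceding discussion of zero elements is relevant -- is exactly this normalization: one cannot instead write $m = \sum_i v_i \chi_{m^{-1}(\{v_i\})}$ over the level partition, since those coefficients sum to $\sum_i v_i$, which is generically $\ne 1$ and may exceed $1$, so rescaling is unavailable; the nested layer-cake form instead drives the coefficient sum down to the maximal value $v_1 \le 1$, and $\chi_{\emptyset}$ then absorbs precisely the residual mass $1 - v_1$. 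Measurability of the level sets and the two telescoping computations are routine.
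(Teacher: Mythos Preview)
Your proof is correct and is essentially the same layer-cake (``telescoping'') decomposition the paper uses: the paper orders the distinct values increasingly and writes the super-level sets as unions $\cup_{i=j}^n S_i$, whereas you order decreasingly and write them directly as $m^{-1}([v_j,1])$, but the resulting expansion and the final normalization by the $(1-\text{max value})\chi_{\emptyset}$ term are identical.
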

\begin{proof}  We  can assume the simple  measurable function  \mbox{$m = \sum_{i=1}^n \alpha_i \chi_{S_i}$} is written with 
 pairwise disjoint measurable sets $\{S_i \}_{i=1}^n$  and has increasing coefficients, $\alpha_1 \le \alpha_2, \ldots \le  \alpha_n$, and each $S_i$ is measurable. (Clearly, the $S_i$ will generally not be in the generating set for $\Sigma A$.)  Moreover, we can assume that $\cup_{i=1}^N S_i = A$.  (If not, add the complementary of the union and associate a $0$ coefficient with it.) This sum can be rewritten as the ``telescoping'' function
\be  \nonumber      
\begin{array}{lcl}
m &=& \alpha_1 \chi_{\cup_{i=1}^n S_i} +(\alpha_2 - \alpha_1) \chi_{\cup_{i=2}^n S_i}  + \ldots \\
&&  + (\alpha_j - \alpha_{j-1}) \chi_{\cup_{i=j}^n S_i} + \ldots  + (\alpha_n - \alpha_{n-1}) \chi_{S_n} + (1-\alpha_n) \chi_{\emptyset}
\end{array}
\ee
which satisfies the condition that the sum of the coefficients is one.
\end{proof}

\section{The map $\mbfI^{\epsilon_{\two}^A}$}
To \emph{motivate} the following construction, the the following observation may be useful. (It is by no means necessary; it is included to lend understanding to \emph{why} the map $\mbfI^{\epsilon_\two^A}$ may have relevance.)  In measure theory, it is well know that every measurable function from a measurable space into the real line can be represented as a limit of a sequence of simple functions.   Thus, in particular, taking the space $\two^A$, which is used to generate the $\sigma$-algebra on $\Sigma A$, we expect $\two^{\Sigma A}$ of all measurable characteristic functions, should yield all the measurable functions $\mbfI^{\Sigma A}$.  We have alread noted  that such function spaces can be viewed as superconvex spaces or positively convex spaces.

 Take the map $\epsilon_\two$, defined in equation (\ref{def2}), and exponentiate it by the (convex) space $A$ to obtain the standard covariant map, in $\Cvx$, given by
 
 \begin{equation}   \nonumber
 \begin{tikzpicture}[baseline=(current bounding box.center)]
 
   \node   (IA)  at   (-1,0)   {$\mbfI^{A}$};
  \node   (2A)    at   (5,0)   {$\two^{A}$};
  \draw[->,above] (IA) to node {$\epsilon_{\two}^{A}$} (2A);
  \node    (A) at   (-2,-1.9)   {$A$};
  \node    (I)  at    (-.3, -1.9)   {$\mbfI$};
  \draw[->,above] (A) to node {$m$} (I);
  \node   (A2) at (4, -1)  {$A$};
  \node    (I2)  at  (6, -1)  {$\mbfI$};
  \node    (2)   at   (6, -3)  {$\two$};
  \draw[->,above] (A2) to node {$m$} (I2);
  \draw[->,right] (I2) to node {$\epsilon_{\two}$} (2);
  \draw[ ->,left] (A2) to node {$\chi_{m^{-1}(1)}=\epsilon_{\two} \circ m$} (2);
       
  \node  (p) at    (1.8, -1.9)   {};
  \draw[|->,dashed] (I) to node {$$} (p);
 \end{tikzpicture}
 \end{equation}
\noindent
The composite map defines a Boolean subobject of $A$ given by 
$(\epsilon_{\two} \circ m)^{-1}(1)=m^{-1}(1)$, yielding  the property 
\be \label{epsm}
\epsilon_{\two} \circ m = \chi_{m^{-1}(1)} \quad \textrm{ for all }m \in \Cvx(A, \mbfI).
\ee

 Exponentiating this map $\epsilon_{\two}^{A}$ by the (convex) space $\mbfI$, we obtain the affine map 
 
 \begin{equation}   \nonumber
 \begin{tikzpicture}[baseline=(current bounding box.center)]
   \node   (wa)   at   (-1,0)   {$\mbfI^{\two^{A}}|_{wa}$};
  \node   (2A)    at   (4,0)   {$\mbfI^{\mbfI^{A}}$};
   \node   (I)      at    (-2, -1)  {$\two^{A}$};
   \node   (I1)   at     (0, -1)   {$\mbfI$};
   \draw[->,above] (wa) to node {$\mbfI^{\epsilon_{\two}^{A}}$} (2A);
   \draw[->, above] (I) to node {$P$} (I1);
   
   \node  (A2)   at   (3, -1)   {$\two^{A}$};
   \node   (2)     at   (5, -1)   {$\mbfI$};
   \node   (IA)    at   (3, -2.2)  {$\mbfI^{A}$};
   
   \draw[->, above] (A2) to node {$P$} (2);   
   \draw[->,left] (IA) to node {$\epsilon_{\two}^A$} (A2);
   \draw[->,right,below, thick] (IA) to node [xshift=5pt]{$P \circ \epsilon_{\two}^A$} (2);
   
   \node  (c1)  at   (8.3, 0)  {$\mbfI^{\two^{A}} = \Cvx(\Cvx(A, \two), \mbfI)$};
   \node   (c2)  at   (8.3, -.9)  {$\mbfI^{\mbfI^{A}} = \Cvx(\Cvx(A, \mbfI), \mbfI)$};
   
  \draw[|->,thick,dashed] (I1) to node {} (A2);
  
 \end{tikzpicture}
 \end{equation}
\noindent
where $\mbfI^{\two^{A}}|_{wa} \hookrightarrow \mbfI^{\two^{A}}$ is the subobject (subspace) consisting of the weakly averaging affine functionals\footnote{Recall, a weakly averaging functional $P$ sends a constant map to the value of the constant, $P(\overline{c})=c$.  The evaluation maps are clearly weakly averaging.},
\begin{equation}   \nonumber
 \begin{tikzpicture}[baseline=(current bounding box.center)]
 
   \node   (PA)  at   (0,0)   {$\two^{A}$};
  \node   (I2A)    at   (4,0)   {$\mbfI$};
   
   \draw[->, above] (PA) to node {$P$} (I2A);
 \end{tikzpicture}
 \end{equation}
\noindent
sending the only two constant functions, $\chi_{\emptyset}$ and $\chi_{A}$, to their respective values,  $0$ and $1$.    

Using equation (\ref{epsm}), it follows that for every weakly averaging affine functional $P \in \mbfI^{\two^{A}}$ that
\be \label{Pproperty}
P(\ulcorner \epsilon_{\two} \circ m \urcorner) = P(\ulcorner \chi_{m^{-1}(1)} \urcorner).
\ee

\begin{lemma}
The image of the map $\mbfI^{\two^A}|_{wa} \stackrel{\mbfI^{\epsilon_{\two}^A}}{\longrightarrow} \mbfI^{\mbfI^A}$ lies in the subspace of weakly averaging affine functionals,  $\mbfI^{\mbfI^A}|_{wa}$.  

\end{lemma}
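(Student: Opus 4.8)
The plan is to unwind what $\mbfI^{\epsilon_{\two}^A}$ does to a functional and then verify the two conditions cutting out $\mbfI^{\mbfI^A}|_{wa}$. By construction $\mbfI^{\epsilon_{\two}^A}$ is precomposition by $\epsilon_{\two}^A$, sending an affine functional $\two^A \stackrel{P}{\longrightarrow} \mbfI$ to the composite $\mbfI^A \stackrel{\epsilon_{\two}^A}{\longrightarrow} \two^A \stackrel{P}{\longrightarrow} \mbfI$. Since $\epsilon_{\two}$ is a morphism of $\Cvx$, so is its exponential $\epsilon_{\two}^A = \Cvx(A,\epsilon_{\two})$, and hence $P \circ \epsilon_{\two}^A$ is again affine; it therefore already lies in $\mbfI^{\mbfI^A} = \Cvx(\mbfI^A,\mbfI)$, and only the weakly averaging property has to be checked.

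For that I would use that an affine functional $\mbfI^A \to \mbfI$ is weakly averaging exactly when it carries the two constant maps $\overline{0},\overline{1} \in \mbfI^A$ to $0$ and $1$ respectively (this is the form of the condition used in the proof of Lemma \ref{wave}). Now apply the identity (\ref{epsm}), namely $\epsilon_{\two}^A(m) = \epsilon_{\two}\circ m = \chi_{m^{-1}(1)}$, at $m=\overline{0}$ and $m=\overline{1}$: since $\overline{0}$ never attains the value $1$ one gets $\epsilon_{\two}^A(\overline{0}) = \chi_{\emptyset}$, while $\overline{1}^{-1}(1) = A$ gives $\epsilon_{\two}^A(\overline{1}) = \chi_A$. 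As $P$ is by hypothesis weakly averaging on $\two^A$, it sends the two constant functions $\chi_{\emptyset}$ and $\chi_A$ to $0$ and $1$; hence $(P\circ\epsilon_{\two}^A)(\overline{0}) = P(\chi_{\emptyset}) = 0$ and $(P\circ\epsilon_{\two}^A)(\overline{1}) = P(\chi_A) = 1$. Therefore $P\circ\epsilon_{\two}^A$ is weakly averaging, i.e. $\mbfI^{\epsilon_{\two}^A}$ maps $\mbfI^{\two^A}|_{wa}$ into $\mbfI^{\mbfI^A}|_{wa}$, which is the assertion.

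I do not expect a serious obstacle: the argument is essentially bookkeeping about where the constant maps are sent. The single point deserving care is that $\epsilon_{\two}$ collapses all of $[0,1)$ onto $\mathbf{0}$, so $\epsilon_{\two}^A$ is highly non-injective and in fact sends \emph{every} constant $\overline{c}$ with $c<1$ — not just $\overline{0}$ — to $\chi_{\emptyset}$; this is why one isolates the two endpoint conditions $\overline{0}\mapsto 0$ and $\overline{1}\mapsto 1$ and closes the argument by affineness (Lemma \ref{wave}) rather than by reading off the values of $P\circ\epsilon_{\two}^A$ on all constants directly.
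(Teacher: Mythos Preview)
Your proof is correct and follows essentially the same approach as the paper: both reduce the weakly averaging condition to checking the values on $\overline{0}$ and $\overline{1}$, use the identity $\epsilon_{\two}^A(m)=\chi_{m^{-1}(1)}$ to compute $\epsilon_{\two}^A(\overline{0})=\chi_{\emptyset}$ and $\epsilon_{\two}^A(\overline{1})=\chi_{A}$, and then invoke that $P$ is weakly averaging on $\two^A$. Your write-up is in fact slightly more direct than the paper's, which routes the conclusion through a comparison with the evaluation maps $ev_a$ rather than simply reading off the two required values.
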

\begin{proof}
Using the trivial pair of Boolean subobjects, $\emptyset$ and $A$,   any constant function $A \stackrel{\overline{c}}{\longrightarrow} \mbfI$ can be written as the convex sum, 
\be \nonumber
\overline{c} = \overline{0} +_{c} \overline{1} = \chi_{\emptyset} +_{c} \chi_{A}. 
\ee
Every weakly averaging functional $P$ satisfies $P(\ulcorner \chi_{\emptyset} \urcorner)=0$ and \mbox{$P(\ulcorner \chi_A \urcorner)=1$}, and conversely if $P$ maps   $\chi_{\emptyset} \mapsto 0$ and $\chi_A \mapsto 1$ then it 
preserves all constant maps on $A$.  Under the mapping $\mbfI^{\epsilon_{\two}^A}$, $P \mapsto P\circ \epsilon_{\two}^A$, and consequently it suffices to show that $\mbfI^{\mbfI^{A}}|_{wa}$ contains all the functionals $P\circ \epsilon_{\two}^A$ such that 
\be \nonumber
(P \circ \epsilon_{\two}^A)(\ulcorner \chi_{\emptyset} \urcorner) = P(\ulcorner\epsilon_{\two} \circ  \chi_{\emptyset} \urcorner)=0
\ee
and
\be \nonumber
(P \circ \epsilon_{\two}^A)(\ulcorner \chi_{A} \urcorner) = P(\ulcorner\epsilon_{\two} \circ  \chi_{A} \urcorner)=1
\ee

Using equation (\ref{Pproperty}) and taking $m= \overline{0}=\chi_{\emptyset}$ we have, for every $a \in A$,
\be \nonumber
\begin{array}{lcl}
P(\ulcorner \epsilon_{\two} \circ \chi_{\emptyset} \urcorner) &=& P(\ulcorner \chi_{\chi_{\emptyset}^{-1}(1)} \urcorner) \\
&=& P(\ulcorner \chi_{\emptyset} \urcorner) \\
&=& 0 \\
&=& ev_a(\ulcorner \chi_{\emptyset} \urcorner)
\end{array}
\ee
Thus  for every weakly averaging functional $P \in \mbfI^{\two^A}$, the image of $P$ under $\mbfI^{\epsilon_{\two}^A}$ is, with respect to evaluation on the affine map $\chi_{\emptyset}$, equivalent to the evaluation map at any point $a \in A$.

Similarly, we have upon taking $m= \overline{1}=\chi_{A}$ the result that, for every $a \in A$, 
\be \nonumber
\begin{array}{lcl}
P(\ulcorner \epsilon_{\two} \circ \chi_{A} \urcorner) &=& P(\ulcorner \chi_{\chi_{A}^{-1}(1)} \urcorner) \\
&=& P(\ulcorner \chi_{A} \urcorner) \\
&=& 1 \\
&=& ev_a(\ulcorner \chi_{A} \urcorner)
\end{array}
\ee
Hence the image of every weakly averaging functional $P \in \mbfI^{\two^A}$ is, with respect to evaluation on the affine map $\chi_{A}$, equivalent to the evaluation map at any point $a \in A$.

Since all the evaluation maps lie in $\mbfI^{\mbfI^A}|_{wa}$, the result follows.

\end{proof}

\section{Proving the adjunction $\T \dashv \Sigma$}  \label{adjunction}

The unit of the proposed adjunction $\T \dashv \Sigma$ is necessarily the unit of the Giry monad, namely
\begin{equation}   \nonumber
 \begin{tikzpicture}[baseline=(current bounding box.center)]
 
  \node     (X) at  (0,0)  {$X$};
   \node   (TX)  at   (6,0)   {$\Sigma(\T(X))$};
  \node    (x)   at   (0, -.8)   {$x$};
  \node    (dx) at   (6, -.8)  {$\delta_x$};
  
  \draw[->,above] (X) to node {$\eta_X$} (TX);
   \draw[|->] (x) to node {} (dx);
 \end{tikzpicture}
 \end{equation}
where $\delta_x$ is the dirac (probability) measure at the point $x$.
 
The counit of the adjunction, at component $A$, 
\begin{equation}   \nonumber
 \begin{tikzpicture}[baseline=(current bounding box.center)]
 
  \node     (PA) at  (0,0)  {$\T(\Sigma(A))$};
   \node   (A)  at   (6,0)   {$A$};

  \draw[->,above] (PA) to node {$\epsilon_A$} (A);

 \end{tikzpicture}
 \end{equation}
 \noindent
is (as always) a universal arrow from the functor $\T$ to the object $A$, and  
the universal arrow $\epsilon_{\two}$ is as given in equation (\ref{def2}). Using the isomorphism between $\mbfI$ and $\T(\Sigma \two)$ we have
\begin{equation}   \nonumber
 \begin{tikzpicture}[baseline=(current bounding box.center)]
 
  \node     (P2) at  (0,0)  {$\T(\Sigma(\two))$};
   \node   (2)  at   (6,0)   {$\two$};
  \node   (PX)    at   (0,-2)   {$\T(X)$};
  \draw[->>,above] (P2) to node {\tiny{$\epsilon_{\two}(\alpha) \stackrel{def}{=} \left\{ \begin{array}{ll} 0 & \textrm{ for all }\alpha \in [0,1) \\ 1 & \textrm{ otherwise} \end{array} \right.$}} (2);    
  \draw[->,left] (PX) to node {$\T(\hat{m})$} (P2);
  \draw[->,below] (PX) to node {$m$} (2);
  
  \node   (22) at  (-4, 0)  {$\Sigma(\two)$};
  \node  (X)  at   (-4, -2)  {$X$};
  \node   (SPX) at  (-2.5, -1)  {$\Sigma(\T(X))$};
  \draw[->,right,dashed] (X) to node [yshift=-2pt] {$\eta_X$} (SPX);
  \draw[->,right,dashed] (SPX) to node [yshift=2pt] {$\Sigma(m)$} (22);
  \draw[->,left] (X) to node {$\hat{m}$} (22);
  \node   (c)   at    (-4, -3)    {in $\M$};
  \node   (d)   at    (3.5, -3)   {in $\Cvx$};
  

 \end{tikzpicture}
 \end{equation}
To show $\epsilon_{\two}$ is a universal arrow from $\T$ to $\two$, let $m$ be any affine map as shown in the diagram.  The adjunct to $m$ is 
\be \nonumber
\hat{m}(x) = \chi_{(\eta_X \circ \Sigma(m))^{-1}}(x) = \left\{ \begin{array}{ll} 1 & \textrm{iff }m(\delta_x)=1 \\  0 & \textrm{otherwise } \end{array} \right.,
\ee
specifying the subset of $X$ consisting of all those elements $x \in X$ such that the corresponding dirac measues $\delta_x$ get mapped to $1$ under the given affine map $m$.

The counit at the other components of $\Cvx$, $\epsilon_A$, can be determined using the affine map $\mbfI^{\epsilon_{\two}^A}$,  the result that the map  $A \stackrel{\widehat{\eta}_A}{\longrightarrow} \mbfI^{\mbfI^A}|_{wa}$ is an isomorphism, and the fact that the image $Im(\mbfI^{\epsilon_{\two}^A}) \subset \mbfI^{\mbfI^A}|_{wa}$.   
Thus, consider the $\Cvx$-diagram 
\begin{figure}[H]
 \begin{equation}   \nonumber
 \begin{tikzpicture}[baseline=(current bounding box.center)]
 
   \node   (A)  at   (.25,0)   {$A$};
  \node   (22A)    at   (4,0)   {$\mbfI^{\mbfI^A}|_{wa}$};
   \node   (I2A)      at    (4, 2)  {$\mbfI^{\two^{A}}|_{wa}$};
   \node   (PA)   at     (.25, 2)     {$\mbfI^{\two^{A}}|_{wa}$};
   
   \draw[->, above] (PA) to node {$id$} (I2A);
   \draw[->,left] (PA) to node {$\epsilon_A \stackrel{def}{=}\widehat{\eta}_A^{-1} \circ \mbfI^{\epsilon_{\two}^A}$} (A);
   \draw[>->>, above] ([yshift=2pt] A.east) to node {$\widehat{\eta}_A$} ([yshift=2pt] 22A.west);
   \draw[>->>,below] ([yshift=-2pt] 22A.west) to node {$\widehat{\eta}_A^{-1}$} ([yshift=-2pt]A.east);
   \draw[->,right] (I2A) to node {$\mbfI^{\epsilon_{\two}^{A}}$} (22A);

  \node   (a)  at   (6,0)   {$\epsilon_A(\ulcorner P \urcorner)$};
  \node   (Pres)    at   (10.6,2)   {$\ulcorner P \urcorner$};
   \node   (eP)      at    (10.6, 0)  {$\ulcorner P \circ \epsilon_{\two}^A  \urcorner$};
   \node   (P)   at     (6, 2)   {$\ulcorner P \urcorner$};
   \node   (a1)  at    (8.78, 0)   {$\ulcorner {ev}_{\epsilon_A(P)}\urcorner=$};
   
   \draw[|->, above,dashed] (P) to node {} (Pres);
   \draw[|->,left, dashed] (P) to node {} (a);
   \draw[|->, below] (a) to node {} (a1);
   \draw[|->,right] (Pres) to node {} (eP);
   
 \end{tikzpicture}
 \end{equation}
\label{pb}
\end{figure}
\noindent
The fact the diagram is a pullback is trivial since $\widehat{\eta}_A$ is an isomorphism.  
\vspace{.1in}

The affine map  $\epsilon_A$  in turn determines  a map on  $\T(\Sigma A)$ using the affine map
sending a probability measure $\hat{P}$ (as traditionally viewed) to the weakly averaging affine map $P$,
 \begin{equation}   \nonumber
 \begin{tikzpicture}[baseline=(current bounding box.center)]
 
   \node   (I2A)      at    (5, 0)  {$\mbfI^{\two^{A}}|_{wa}$};
   \node   (PrA)  at    (0,0)    {$\T(\Sigma A)$};
   
   \draw[->, above] (PrA) to node {$\phi_A$} (I2A);

    \node   (P)   at    (0, -1)    {$\hat{P}$};
    \node    (2A) at    (4, -1)    {$\two^{A}$};
    \node    (I)    at    (6, -1)     {$\mbfI$};
    \draw[->, above] (2A) to node {$P$} (I);
    \draw[|->] (P) to node {} (2A);
    \node   (U)  at   (4, -1.8)   {$\chi_U$};
    \node   (PU) at  (6, -1.8)   {$\hat{P}(U)$};
    \draw[|->] (U) to node {} (PU);
    
 \end{tikzpicture}
 \end{equation}
To prove $\phi_A$ is an affine map we need only use the pointwise definition of the convex structure on the function space,
 \be \nonumber
 \begin{array}{lcl}
 \phi_A(\hat{P} +_{\alpha} \hat{Q})(\chi_U) &=& (P+_{\alpha} Q)(U) \\
 &=& P(U) +_{\alpha} Q(U) \\
 &=& (\phi_A(\hat{P}) +_{\alpha} \phi_A(\hat{Q}))(\chi_U)
 \end{array}.  
 \ee
 
 This affine map $\phi_A$ is an isomorphism with the inverse given by 
 \begin{equation}   \nonumber
 \begin{tikzpicture}[baseline=(current bounding box.center)]
 
   \node   (I2A)      at    (0, 0)  {$\mbfI^{\two^{A}}|_{wa}$};
   \node   (PrA)  at    (5,0)    {$\T(\Sigma A)$};
   
   \draw[->, above] (I2A) to node {$\phi_A^{-1}$} (PrA);

    \node   (P)   at    (5, -1)    {$\hat{P}$};
    \node    (2A) at    (-1, -1)    {$\two^{A}$};
    \node    (I)    at    (1, -1)     {$\mbfI$};
    \draw[->, above] (2A) to node {$P$} (I);
    \draw[|->] (I) to node {} (P);
    
 \end{tikzpicture}
 \end{equation}
where $\hat{P}(A_0) = P(\chi_{A_0})$ on the Boolean subobjects of $A$ which generate $\Sigma_A$.  Since the Boolean subobjects form,   by Lemma (\ref{piSystem}),  a $\pi$ system this completely defines the probability measure $\hat{P}$. 
Hence If $\hat{P}$ and $\hat{Q}$ are two probability measures on $\Sigma A$, which agree on the generating set, $\hat{P}(A_0)=\hat{Q}(A_0)$ for all Boolean subobjects $A_0$, then $\hat{P}=\hat{Q}$.\footnote{The set $D=\{U \in \Sigma_A \, | \, \hat{P}(U) = \hat{Q}(U) \}$ forms a Dynkin system. Thus $\Sigma_A$, which is generated by the Boolean subobjects, satisfies $\Sigma_A  \subset D$, since the Boolean subobjects are closed under finite intersection.}  Note that the value at any other measurable subset $U \in \Sigma A$ can be calculated as the limit of an infinite sum because $\mbfI$ is a positively convex space, i.e., $\mbfI$ can be viewed not only as a convex space, but as a positively convex space.  We subsequently drop reference to the isomorphism $\phi_A$ and view $\T(\Sigma A)$ as $\mbfI^{\two^A}|_{wa}$.

The maps $\{\phi_A\}_{A \in_{ob} \Cvx}$ form the components of a natural transformation $\T(\Sigma (\bullet)) \Rightarrow \mbfI^{\two^{\bullet}}$, which is an elementary verification noting 
 \begin{equation}   \nonumber
 \begin{tikzpicture}[baseline=(current bounding box.center)]
 
   \node   (I2A)      at    (0, 0)  {$\mbfI^{\two^{A}}|_{wa}$};
   \node   (PrA)  at    (3,0)    {$\T(\Sigma A)$};
   \node   (I2B)   at    (0, -2)  {$\mbfI^{\two^B}|_{wa}$};
   \node   (PrB)  at     (3, -2)  {$\T(\Sigma B)$};
   
   \draw[->, above] (I2A) to node {$\phi_A^{-1}$} (PrA);
   \draw[->,left] (I2A) to node {$\mbfI^{\two^m}$} (I2B);
   \draw[->,below] (I2B) to node {$\phi_B^{-1}$} (PrB);
   \draw[->,right] (PrA) to node {$\T\Sigma(m)$} (PrB);

   \node   (P)      at    (5, 0)  {$P$};
   \node   (PA)  at    (8.4,0)    {$\hat{P}$};
   \node   (P2m)   at    (5, -2)  {$P \circ \two^m$};
   \node   (PB)  at     (8.4, -2)  {$\hat{P}m^{-1}$};
   \node    (p2mB) at  (7,-2)  {$\widehat{P \circ \two^m}=$};
   
   \draw[|->, above] (P) to node {} (PA);
   \draw[|->,right] (PA) to node {} (PB);
   \draw[|->,left] (P) to node {} (P2m);
   \draw[|->,below] (P2m) to node {} (p2mB);    
 \end{tikzpicture}
 \end{equation}
\noindent
where the commutativity follows from
\be \nonumber
\widehat{ P \circ \two^m}(V) = P(\chi_{m^{-1}(V)}) = \hat{P}m^{-1}(V) \quad \forall V \in \sa_B.
\ee

Now we can finally show that the functors $\T$ and $\Sigma$ form an adjoint pair between $\M$ and $\Cvx$, and that the two adjunctions, $\mathcal{F}^{\G} \dashv \mathcal{U}^{G}$ and $\T \dashv \Sigma$, shown in Diagram (\ref{comparisonFunctor}),  yield the same monad on $\M$.
 
\begin{thm} \label{it}  The functor $\T$ is left adjoint to $\Sigma$, $\T \dashv \Sigma$, and is naturally isomorphic to the Giry monad,   
\be \nonumber
(\G, \eta, \mu) \cong (\Sigma \circ \T, \eta, \Sigma(\epsilon_{\T \_})).
\ee
\end{thm}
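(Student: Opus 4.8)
The plan is to check that $\eta$ (the Dirac embedding) and $\epsilon$ (the transformation constructed above from $\epsilon_{\two}$, $\phi$ and $\widehat{\eta}$) constitute an adjunction $\T\dashv\Sigma$ via the triangle identities, and then to read off the induced monad on $\M$. First I would confirm that $\eta$ and $\epsilon$ are natural transformations of the stated variances. For $\eta\colon\mathrm{Id}_{\M}\Rightarrow\Sigma\T$ this has two parts: the naturality square is the classical naturality of $x\mapsto\delta_x$ for $\G$, and each component $\eta_X$ is a morphism of measurable spaces because $\sa_{\Sigma\T X}$ is generated by $\Cvx(\T X,\two)$, so it is enough that $\{x\in X:\chi(\delta_x)=1\}$ be measurable for every affine $\chi\colon\T X\to\two$ — which is exactly the content of $\epsilon_{\two}$ being a universal arrow from $\T$ to $\two$, its adjunct being that characteristic map. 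Naturality of $\epsilon\colon\T\Sigma\Rightarrow\mathrm{Id}_{\Cvx}$ follows by pasting the naturality of $\phi$ (verified above), of the unit $\widehat{\eta}$ of the double–dualization monad into $\mbfI$, and of $A\mapsto\mbfI^{\epsilon_{\two}^{A}}$.

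Next I would verify the two triangle identities. For $\Sigma(\epsilon_A)\circ\eta_{\Sigma A}=\mathrm{id}_{\Sigma A}$ I would chase a point $a\in A$: $\eta_{\Sigma A}$ sends it to $\delta_a\in\T\Sigma A$, which under $\phi_A$ is the evaluation functional $\chi_{A_0}\mapsto\chi_{A_0}(a)$ on $\two^{A}$; applying $\mbfI^{\epsilon_{\two}^{A}}$ produces a weakly averaging affine functional on $\mbfI^{A}$, and one must see that $\widehat{\eta}_A^{-1}$ returns $a$ — this is precisely Lemma~\ref{mainRes} (a weakly averaging affine functional on $\mbfI^{A}$ is an evaluation map) together with $\widehat{\eta}_A$ being an isomorphism onto $\mbfI^{\mbfI^A}|_{wa}$. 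For $\epsilon_{\T X}\circ\T(\eta_X)=\mathrm{id}_{\T X}$ I would again use that $\epsilon_{\two}$ is universal, bootstrap along all Boolean subobjects of $\T X$ (these form a $\pi$-system by Lemma~\ref{piSystem}), and apply Lemma~\ref{coinduced} to conclude that the two affine maps $\T X\to\T X$ agree after composing with each generating functional in $\Cvx(\T X,\two)$, hence coincide. With both identities established, $\T\dashv\Sigma$.

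Finally I would identify the induced monad. On underlying sets $\Sigma\T X$ and $\G X$ are both the set of probability measures on $X$, so it remains to match $\sigma$-algebras: each Giry generator $ev_B\colon\G X\to\mbfI$, $B\in\sa_X$, is affine and $\Sigma\mbfI$ carries the Borel $\sigma$-algebra, so the Giry $\sigma$-algebra is contained in $\sa_{\Sigma\T X}$; conversely every Boolean subobject of the convex space $\G X$ is Giry–measurable, which uses the separatedness and representation results of \S\ref{separated}–\S\ref{codensity}. Hence $\Sigma\T\cong\G$ as functors, the unit is the Dirac transformation on both sides, and — the adjunction being fixed — the multiplication is forced to be $\Sigma(\epsilon_{\T\_})$; a direct computation on a Boolean subobject $A_0\hookrightarrow\T X$ gives $\Sigma(\epsilon_{\T X})(\Xi)(A_0)=\int_{\G X}\hat{P}(A_0)\,d\Xi(\hat{P})$, which is Giry's $\mu_X$. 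This yields $(\G,\eta,\mu)\cong(\Sigma\T,\eta,\Sigma(\epsilon_{\T\_}))$.

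The main obstacle is the first triangle identity: it rests entirely on the assertion that every weakly averaging affine functional $\mbfI^{A}\to\mbfI$ is an evaluation map and on the precise compatibility of $\phi_A$, $\mbfI^{\epsilon_{\two}^{A}}$ and $\widehat{\eta}_A$ under that identification. Pinning down the correct codomain restriction — to the weakly averaging functionals and no larger — is the delicate point, and (as the editorial note above already signals) it must be handled with more care than a first pass suggests; it is exactly here that the argument is forced to lean on the (co)density of $\I$ in $\Cvx$.
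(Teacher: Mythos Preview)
Your plan is correct and closely parallels the paper's argument, though organized around the unit--counit triangle identities rather than the hom-set bijection the paper uses. The paper defines $\hat f=\epsilon_A\circ\T(f)$ for a given $f\colon X\to\Sigma A$, checks $\Sigma(\hat f)\circ\eta_X=f$ (which, specialized to $f=\mathrm{id}_{\Sigma A}$, is exactly your first triangle identity), and for uniqueness simply \emph{asserts} $\epsilon_{\T X}\circ\T(\eta_X)=\mathrm{id}_{\T X}$ and uses naturality of $\epsilon$ to extend from Dirac measures to arbitrary $P$; you instead propose to verify that second identity directly. Both routes arrive at the same place, and yours is arguably more explicit about what must be checked---in particular you correctly flag that matching the $\sigma$-algebras on $\G X$ and $\Sigma\T X$ requires both containments, whereas the paper only argues one direction carefully. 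One small slip: Lemma~\ref{coinduced} is about \emph{measurability} of a map out of a coinduced space, not about equality of two affine maps; to conclude that two affine maps $\T X\to\T X$ agree after composing with every $\chi\in\Cvx(\T X,\two)$ you want the separatedness result of \S\ref{separated} (the Boolean subobjects separate points of $\T X$), not Lemma~\ref{coinduced}. For the multiplication, the paper's computation is essentially the same as yours: it uses naturality of $\epsilon$ together with $\T(\chi_U)=ev_U$ to reduce $\Sigma(\epsilon_{\T X})$ to the Giry integral formula.
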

\begin{proof}
This functor $\mathbf{\Sigma}$ endows each convex space
of probability measures $\T(X)$ with the same $\sigma$-algebra as that associated with the Giry monad since the evaluation maps, which are used to define the $\sigma$-algebra for the Giry monad, pulls back subobjects (=intervals) of $\mbfI$ to subobjects of $\T(X)$,\footnote{Conversely, the image of every convex subspace of $\T(X)$ is a convex space under every affine map.}

\begin{equation}   \nonumber
 \begin{tikzpicture}[baseline=(current bounding box.center)]
 
   \node   (ab)  at   (5,1.5)   {$(a,b)$};
  \node   (P)    at   (0,1.5)   {$ev_U^{-1}((a,b))$};
  \node   (I) at  (5, 0)  {$\mbfI$};
  \node  (TX)  at   (0, 0)  {$\T(X)$};
  \node   (c)   at    (7, .0)    {$U \in \sa_X$};
  
 \draw[>->] (ab) to node {} (I);
 \draw[>->,dashed] (P) to node {} (TX);
 \node  (c)  at   (7,1.1)   {in $\Cvx$};

   \draw[->,below] (TX) to node {$ev_U$}(I);
   \draw[->,above, dashed] (P) to node {} (ab);

 \end{tikzpicture}
 \end{equation}
 
The two natural transformations,   $\T\circ \Sigma \stackrel{\epsilon}{\longrightarrow} id_{\Cvx}$ and $id_{\M} \stackrel{\eta}{\longrightarrow} \Sigma \circ \T$, where $\eta_X$ sends a point $x \in X$ to the dirac measure $\delta_x$, together yield the required  bijective correspondence.  Given a measurable function $f$ 
\begin{equation}   \nonumber
 \begin{tikzpicture}[baseline=(current bounding box.center)]
 
   \node   (X)  at   (-1,0)   {$X$};
  \node    (TX) at  (3,0)  {$\Sigma(\T(X))$};
   \node   (SA)  at  (3, -3)  {$\Sigma A$};
   \node   (c)    at   (2, -4)  {in $\M$};
   
   \node   (PX)  at   (6, 0)  {$\T(X)$};
   \node    (A)   at    (6, -3)  {$A$};
   \node    (PA) at    (7.5, -1.5)  {$\T(\Sigma A)$};
   \node    (c2)  at    (6.5, -4)   {in $\Cvx$};
   \node    (d2)  at    (10, -1.5)  {$\hat{f} \stackrel{def}{=} \epsilon_A \circ \T(f)$};
   
 \draw[->,above] (X) to node {$\eta_X$} (TX);
 \draw[->,right] (TX) to node {$\Sigma(\hat{f})$} (SA);
 \draw[->,below] (X) to node {$f$} (SA);
 
 \draw[->,left,dashed] (PX) to node {$\hat{f}$} (A);
 \draw[->, right] (PX) to node {$\T(f)$} (PA);
 \draw[->>,right] (PA) to node {$\epsilon_A$} (A);
 
 \end{tikzpicture}
 \end{equation}
\noindent 
define  $\hat{f}=\epsilon_A \circ \T(f)$, which  yields
\be \nonumber
\Sigma(\epsilon_A \circ  \T(f))\circ \eta_X(x) = \Sigma(\epsilon_A \circ \delta_{f(x)}) = f(x)
\ee
proving  the  existence of an adjunct arrow to $f$.  The uniqueness then follows from the fact that if $g \in \Cvx(\T(X), A)$ also satisfies the required commutativity condition of the diagram on the left,   $\Sigma g \circ \eta_X = f$, which says that for every  $x \in X$ that
\be \nonumber
g( \delta_x) = f(x) = \epsilon_A(\delta_{f(x)}) =  (\epsilon_{A} \circ \T(f))(\delta_x) = \hat{f}(\delta_x).
\ee
We can now use the fact that $\epsilon_{\T(X)} \circ \T(\eta_X) = id_{\T(X)}$  to conclude that for an arbitrary probability measure $P \in \T(X)$ that
 $g(P) = \hat{f}(P)$ follows using $g(P) = g(\epsilon_{\T(X)}(\delta_{P}))$ and naturality, 
 
 \begin{equation}   \nonumber
 \begin{tikzpicture}[baseline=(current bounding box.center)]
 
   \node   (PX)  at   (0,0)   {$\T(X)$};
  \node    (PSPX) at  (4,0)  {$\T(\Sigma(\T(X)))$};
  \node    (PX0)  at    (8,0)   {$\T(X)$};
  
   \node   (PA)  at  (4, -2)  {$\T(\Sigma A)$};
   \node   (A)  at   (8, -2)  {$A$};
   
 \draw[->,above] (PX) to node {$\T(\eta_X)$} (PSPX);
 \draw[->>,above] (PSPX) to node {$\epsilon_{\T(X)}$} (PX0);
 \draw[->,below] (PX) to node [xshift=-4pt] {$\T(f)$} (PA);
 \draw[->,right] (PSPX) to node {$\T(\Sigma g)$} (PA);
 \draw[->>, below] (PA) to node {$\epsilon_A$} (A);
 \draw[->,right] (PX0) to node {$g$} (A);

   \node   (P)  at   (0,-3)   {$P$};
  \node    (deltaP) at  (4,-3)  {$\delta_P$};
  \node    (P2)  at    (8,-3)   {$P$};
  
   \node   (Pf)  at  (4, -5)  {$Pf^{-1} \sim \delta_{g(P)}$};
   \node   (gP)  at   (8, -5)  {$\hat{f}(P) = g(P)$};
   
 \draw[|->] (P) to node {} (deltaP);
 \draw[|->] (deltaP) to node {} (P2);
 \draw[|->] (deltaP) to node {} (Pf);
 \draw[|->] (Pf) to node {} (gP);
 \draw[|->] (P2) to node {} (gP);
 \draw[|->] (P) to node {} (Pf);

 \end{tikzpicture}
 \end{equation}
\noindent 
where the bottom path,  $\hat{f} = \epsilon_A \circ \T(f)$ yields $\hat{f}(P)$,  while the east-south path  gives $g(P) = g(\epsilon_{\T(X)} \circ \T(\eta_X))$.

 The unit of $\T \dashv \Sigma$ is  $\eta$ (the same as the Giry monad), and the multiplication determined by the adjunction $\T \dashv \Sigma$ is given by
\be \nonumber
\tilde{\mu}_X = \Sigma( \epsilon_{\T(X)})
\ee
where the functor $\Sigma$ just makes the affine map $\epsilon_{\T(X)}$ a measurable function.
We must show that this $\tilde{\mu}$ coincides with the multiplication $\mu$ of the Giry monad which is defined componentwise by 
\be \nonumber
\mu_X(P)[U] = \int_{q \in \G(X)} ev_U(q) \, dP(q).
\ee
This follows by the naturality of $\epsilon$ and the fact $\T(\chi_U) = ev_U$ for all $U \in \sa_X$,
  \begin{equation}   \nonumber
 \begin{tikzpicture}[baseline=(current bounding box.center)]
         \node   (STSTX)  at   (0,0)   {$\Sigma(\T(\Sigma(\T(X))))$};
         \node  (STX)     at   (4,0)   {$\Sigma(\T(X))$};
         \node  (STST2)     at  (0, -2)  {$\Sigma(\T\Sigma(\T(\two)))$};
         \node   (ST2)    at   (4, -2)    {$\Sigma(\T(\two))$};
   
         \draw[->,left] (STSTX) to node {$\Sigma(\T(ev_U))$} (STST2);
         \draw[->,right] (STX) to node  {$ev_U$} (ST2);
         \draw[->, above]  (STSTX) to node {$\Sigma(\epsilon_{\T(X)})$} (STX);
         \draw[->, below] (STST2) to node [xshift=3pt]{$\Sigma(\epsilon_{\T(\two)})$} (ST2);
         \node   (P)  at   (6,0)   {$P$};
         \node  (Q)     at   (11.5,0)   {$Q$};
         \node  (Pev)     at  (6, -2)  {$Pev_U^{-1}$};
         \node   (QU)    at   (9.5, -2)    {$\Sigma(\epsilon_{\T(\two)})(Pev_U^{-1}) = Q(U)$};
         \node    (p)       at    (11.5, -1.7)  {};
         \node    (c)    at    (8.4, -1)  {where $P \sim \delta_Q$};
         \draw[|->] (P) to node {} (Pev);
         \draw[|->] (Q) to node  {} (p);
         \draw[|->]  (P) to node {} (Q);
         \draw[|->] (Pev) to node [xshift=0pt]{} (QU);

	 \end{tikzpicture}
 \end{equation}
The east-south path gives $\tilde{\mu}_{\T(X)}\Sigma(\epsilon_{\T(X)}(P))[U] = Q(U)$, while the south-east path gives the multiplication $\mu_{X}$ of the Giry monad, 
\be \nonumber
\Sigma(\epsilon_{\T(\two)})(Pev_U^{-1}) = \int_{q \in \G(X)} q(U) \, dP = \mu_{X}(P)[U]. 
\ee
where we have used the fact $\mu_{\two} = \Sigma(\epsilon_{\T(\two)})$.

\end{proof}

\section{The equivalence of Giry algebras with convex spaces}   \label{equiv}

To show that the category $\M^{\G}$ is equivalent to $\Cvx$, we need to show that the adjoint pair\footnote{Since $\Cvx$ is cocomplete, the left adjoint $\hat{\Phi}$  to the comparison functor exist. (This left adjoint construction always exist whenever the category in question, here $\Cvx$, has coequalizers.)  The question of equivalence amounts to showing that the two composites of those functors are naturally isomorphic to the identity on the respective categories.

The fact that $\Sigma$ reflects isomorphisms is trivial since, for every affine map $m$,  $\Sigma m$ is just an  affine measurable function.  This implies that the counit of the adjunction $\hat{\Phi} \dashv \Phi$ is an isomorphism, and it only remains to show the unit of the adjunction is an isomorphism.}
\begin{equation}   \nonumber
 \begin{tikzpicture}[baseline=(current bounding box.center)]
         \node   (MG)  at   (0, 0)  {$\M^{\G}$};
         \node  (C) at  (5,0)   {$\Cvx$};
         \node  (c)   at    (8, 0)   {$\hat{\Phi} \dashv \Phi$};
         
        \draw[->,below] ([yshift=-2pt] C.west) to node {$\Phi$} ([yshift=-2pt] MG.east);
        \draw[->,above] ([yshift=2pt] MG.east) to node {$\hat{\Phi}$} ([yshift=2pt] C.west);

	 \end{tikzpicture}
 \end{equation}
 \noindent
have the unit and counit of the adjunction  naturally isomorphic to the identity functors on $\Cvx$ and $\M^{\G}$.  The functor $\hat{\Phi}$, applied to a Giry algebra $\G(X) \stackrel{h}{\longrightarrow} X$,  is the coequalizer (object) of the parallel pair\footnote{Recall, by Theorem \ref{it}, $\mu_X =\Sigma(\epsilon_{\T(X)})$.} 

\begin{equation}   \nonumber
 \begin{tikzpicture}[baseline=(current bounding box.center)]
         \node  (GX)  at (-0,0)    {$\T(\G(X))$};
         \node  (X)    at  (4,0)    {$\T(X)$};
         \node   (CC) at  (8,0)  {$CoEq$};
         \draw[->>,above] (X) to node {$q$} (CC);

        \draw[->,above] ([yshift=2pt] GX.east) to node {$\epsilon_{\T(X)}$} ([yshift=2pt] X.west);
        \draw[->,below]  ([yshift=-2pt] GX.east) to node {$\T(h)$} ([yshift=-2pt] X.west);

	 \end{tikzpicture}
 \end{equation}
 The convex space $Coeq$ is the $\Cvx$ object corresponding to the Giry algebra $h$.  For the equivalence to hold, we must show that the map of $\G$-algebras
 
 \begin{equation}   \nonumber
 \begin{tikzpicture}[baseline=(current bounding box.center)]
         \node  (GX)  at (0,0)    {$\G(X)$};
         \node  (X)    at  (0,-2)    {$X$};
         
         \node   (CoEq) at  (4,0)  {$\Sigma(\T(\Sigma CoEq))$};
         \node   (Co)      at   (4, -2) {$\Sigma CoEq$};
         \node     (c)     at     (7, -1)  {in $\M$};
         \draw[->,left] (GX) to node {$h$} (X);
         \draw[->, right] (CoEq) to node {$\Sigma(\epsilon_{Coeq})$} (Co);
         \draw[->, above] (GX) to node {$\G(\theta)$} (CoEq);
         \draw[->,below] (X) to node {$\theta$} (Co);
        
	 \end{tikzpicture}
 \end{equation}
 is an isomorphism.   Towards this end, let $(Ker(q), m_1, m_2)$ denote the kernel pair of the coequalizer $q$.  Since $q \circ \epsilon_{\T(X)} = q \circ \T(h)$ there exist a unique map $\psi$ such that the $\Cvx$-diagram
 \begin{equation}   \nonumber
 \begin{tikzpicture}[baseline=(current bounding box.center)]
         \node   (GGX) at  (0,2.5)   {$\T(\G(X))$};
         \node  (GX)    at  (0,0)    {$\T(X)$};
         \draw[->,right] ([xshift=2pt] GGX.south) to node [xshift=2pt] {$\T(h)$} ([xshift=2pt] GX.north);
         \draw[->,left] ([xshift=-2pt] GGX.south) to node {$\epsilon_{\T(X)}$} ([xshift=-2pt] GX.north);

         \node   (E)  at   (-3,0)  {$Ker(q)$};
         \node   (CC) at  (0,-1.9)  {$CoEq$};
        \draw[->>,right]  ([xshift=0pt] GX.south) to node [xshift=5pt]{$q$} ([xshift=0pt] CC.north);
        \draw[->,above] ([yshift=2pt] E.east) to node {$m_1$} ([yshift=2pt] GX.west);
        \draw[->, below] ([yshift=-2pt] E.east) to node {$m_2$} ([yshift=-2pt] GX.west);

        \draw[->,left] (GGX) to node [yshift=3pt]{$\psi$} (E);

         \node   (rGGX) at  (7,2.25)   {$\G^2(X)$};
         \node  (rGX)    at  (7,0)    {$\G(X)$};
         \draw[->>,right] ([xshift=2pt] rGGX.south) to node [xshift=2pt] {$\G(h)$} ([xshift=2pt] rGX.north);
         \draw[->>,left] ([xshift=-2pt] rGGX.south) to node {$\mu_X$} ([xshift=-2pt] rGX.north);
         \node  (rX)      at   (7,-2)    {$X$};
         \node   (rCC) at  (3,-2)  {$\Sigma CoEq$};    \node (prCC) at  (3.2, -1.8)  {};
         \node   (rE)  at   (3,0)  {$\Sigma Ker(q)$};
         \node   (rEp) at  (3,-.1)  {};
         \node   (rCCp) at (3,-1.9) {};
        \draw[->>,right] (rGX) to node {$h$} (rX);
        \draw[->,below]  (rGX.south) to node [xshift=5pt]{$\Sigma q$} (prCC.north);
        \draw[->,above] ([yshift=2pt] rE.east) to node {$\Sigma m_1$} ([yshift=2pt] rGX.west);
        \draw[->, below] ([yshift=-2pt] rE.east) to node {$\Sigma m_2$} ([yshift=-2pt] rGX.west);
        \draw[->,left] (rEp) to node [xshift=0pt] {\tiny{$\Sigma(q \circ m_1)$}} (rCCp);
        \draw[->,below,dashed] (rX) to node {$\theta$} (rCC);
        \draw[->,left] (rGGX) to node [xshift=-4pt]{$\Sigma \psi$} (rE);

	 \end{tikzpicture}
 \end{equation}
 \noindent
 commutes.  Now apply the functor $\Sigma$ to this diagram to obtain the commutative $\M$-diagram on the right hand side of the above diagram.
 

Defining
 \be \nonumber
 \theta = \Sigma q \circ \eta_X 
 \ee
 in this diagram, we obtain the (equivalent but redrawn)  commutative diagram 
   \begin{equation}   \nonumber
 \begin{tikzpicture}[baseline=(current bounding box.center)]

         \node  (G2X)    at  (-5,4)    {$\G^2(X)$};
         \node   (GX)   at    (-5,0)  {$\G(X)$};
         \node   (GX2) at    (0,4)   {$\G(X)$};
         \node    (ker)  at    (-2.5,2)  {$\Sigma ker(q)$};
         \node    (X)     at      (2.,-2.)  {$X$};
         \node    (CC)  at     (0,-0)  {$\Sigma CoEq$};
         \node    (Xp)   at      (2.3,-2.4)  {};
         \node    (CCp) at    (-.1, -.1)    {};
        \draw[->,above] (G2X) to node {$\G(h)$} (GX2); 
        \draw[->,out=-30, in=90,looseness=.5, right] (GX2) to node {$h$} (X);
        \draw[->,left] (G2X) to node {$\mu_X$} (GX);
        \draw[->,out=-30,in=180,looseness=.5,below] (GX) to node {$h$} (X);    
        \draw[->, above] (X) to node {$\theta$} (CC);
        \draw[->,below,dashed] (CCp) to node [xshift=-5pt]{$\theta^{-1}$} (Xp);
        \draw[->,below] (GX) to node {$\Sigma q$} (CC);
        \draw[->,right] (GX2) to node [xshift=0pt] {$\Sigma q$} (CC);
        \draw[->,right] (G2X) to node {$\Sigma \psi$} (ker);
        \draw[->,right] (ker) to node [yshift=-3pt]{$\Sigma m_1$} (GX2);
        \draw[->,right] (ker) to node [xshift=3pt]{$\Sigma m_2$} (GX);
\end{tikzpicture}
 \end{equation}
 \noindent 
where the outer paths commute since the $\G$-algebra $h$ satisfies the condition that $h \circ \G(h) = h \circ \mu_X$.  
 
 Since $(\Sigma(ker(q)), \Sigma m_1, \Sigma m_2)$  is a pullback and $h \circ \Sigma m_1 = h \circ \Sigma m_2$, there exist a 
 unique map $\Sigma CoEq \longrightarrow X$ which (necessarily) is the inverse of $\theta$, and hence we obtain the isomorphism of measurable spaces
\be \nonumber
X \cong \Sigma CoEq.
\ee
This proves that the unit of the adjunction $\hat{\Phi} \dashv \Phi$ is naturally isomorphic to the identity functor $id_{\M}$.
 
Conversely, given a convex space $C$, applying the functor $\Sigma$ to the counit of the adjunction at $C$ gives the $\G$-algebra\footnote{This proof this is a $\G$-algebra is a straightforward verification.}
\begin{equation}   \nonumber
 \begin{tikzpicture}[baseline=(current bounding box.center)]
         \node  (PC)  at (1,0)    {$\Sigma(\T(\Sigma C))$};
         \node  (C) at  (5,0)   {$\Sigma C$};
         
        \draw[->,above] (PC) to node {$\Sigma \epsilon_C$} (C);

	 \end{tikzpicture}
 \end{equation}
\noindent
This is precisely the process of applying the comparison functor $\Phi$ to the convex space $C$.
Now if we apply the preceding process (apply the functor $\hat{\Phi})$ to this $\G$-algebra, we construct the coequalizer of the parallel pair
\begin{equation}   \nonumber
 \begin{tikzpicture}[baseline=(current bounding box.center)]
         \node  (PC)  at (1.4,0)    {$\T(\Sigma(\T(\Sigma C)))$};
         \node  (C) at  (6,0)   {$\T(\Sigma C)$};
         \node   (A) at  (9, 0)  {$C$};
         
        \draw[->,above] ([yshift=2pt] PC.east) to node {$\epsilon_{\T(\Sigma C)}$} ([yshift=2pt] C.west);
        \draw[->,below] ([yshift=-2pt] PC.east) to node {$\T (\Sigma \epsilon_C)$} ([yshift=-2pt] C.west);
        \draw[->,above,dashed] (C) to node {$\epsilon_C$} (A);
	 \end{tikzpicture}
 \end{equation}
\noindent
which is, up to isomorphism, just the  convex space $C$ and counit of the adjunction $\T\dashv \Sigma$.

\appendix
\appendixpage
\section{Some aspects of the SMCC structure of $\M$}

The tensor product construction yielding  the monoidal structure $(\M, \otimes, 1)$ can, depending upon the $\sigma$-algebra structure of the space, have properties quite different from that of the product monoidal structure as the following result and example illustrate.

\begin{lemma} If $X \stackrel{f}{\longrightarrow} Y$ is not a constant function then the graph $\Gamma_f$ is not necessarily measurable.
\end{lemma}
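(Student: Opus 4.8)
The plan is to read the assertion as an existence statement about \emph{graph sets}: even though $f$ is measurable, the graph $\Gamma_f(X)=\{(x,f(x))\mid x\in X\}$ need not be a measurable subset of the cartesian product $X\times Y$ equipped with the product $\sigma$-algebra (the one generated by the two coordinate projections). This is the real content, because as a \emph{function} $X\to X\times Y$ the graph map is always measurable into the product---its two coordinates are $id_X$ and $f$, both measurable---so ``$\Gamma_f$ is measurable'' can only be meant in the sense ``$\Gamma_f(X)$ is a measurable subset'', which is the usual reading of ``the graph is measurable''. The non-constancy hypothesis is exactly what keeps the statement from being vacuous: if $f$ is constant at $c$ and $\{c\}\in\sa_Y$, then $\Gamma_f(X)=X\times\{c\}$ is automatically product-measurable.

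First I would give a minimal counterexample. Let $X=\{0,1\}$ carry the discrete $\sigma$-algebra, let $Y=\{0,1\}$ carry the indiscrete $\sigma$-algebra $\sa_Y=\{\emptyset,Y\}$, and let $f=id$. Then $f$ is measurable (the only preimages to check are $f^{-1}(\emptyset)=\emptyset$ and $f^{-1}(Y)=X$) and is not constant. The product $\sigma$-algebra on $X\times Y$ is generated by the sets $A\times Y$ for $A\subseteq X$ together with $X\times B$ for $B\in\sa_Y$, and this is already the four-element $\sigma$-algebra $\{\,\emptyset,\ \{0\}\times Y,\ \{1\}\times Y,\ X\times Y\,\}$. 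The graph $\Gamma_f(X)=\{(0,0),(1,1)\}$ is none of these four sets, hence it is not product-measurable, which proves the lemma. (The same set also fails to lie in the \emph{tensor} $\sigma$-algebra $\sa_{X\otimes Y}$, since for the constant map $h\equiv 0$ the preimage $\Gamma_h^{-1}(\Gamma_f(X))=\{y\in Y\mid f(0)=y\}=\{0\}$ is not in $\sa_Y$; so the example is robust to whichever of the two $\sigma$-algebras on $X\times Y$ one has in mind.)

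To show the phenomenon is not merely an artefact of a degenerate codomain $\sigma$-algebra, I would also record the classical ``Nedoma'' instance, where $\sa_Y$ even separates points. Take a set $S$ with $|S|>2^{\aleph_0}$, put $X=Y=(S,2^S)$ and $f=id_S$, so that $\Gamma_f(X)$ is the diagonal $\Delta_S=\{(s,s)\mid s\in S\}$. The key step is that $\Delta_S$ is not in the product $\sigma$-algebra of $(S,2^S)$ with itself. Indeed, any member of a $\sigma$-algebra generated by a family already lies in the $\sigma$-algebra generated by some \emph{countable} subfamily; applied to the generating rectangles $A\times B$, this would put $\Delta_S$ inside the product $\sigma$-algebra built from some \emph{countably generated} sub-$\sigma$-algebra $\mathcal A\subseteq 2^S$. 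Now a countably generated $\sigma$-algebra on $S$ has at most $2^{\aleph_0}$ atoms (an atom is determined by a function $\mathbb N\to\{0,1\}$), so by cardinality some atom $D$ contains two distinct points $s_1\ne s_2$; then $D\times D$ is an atom of that product $\sigma$-algebra, and it is neither contained in $\Delta_S$ (it contains $(s_1,s_2)\notin\Delta_S$) nor disjoint from it (it contains $(s_1,s_1)\in\Delta_S$). Since every set in a countably generated $\sigma$-algebra is a union of its atoms, $\Delta_S$ cannot belong to it---a contradiction.

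I expect the only genuine difficulty to be the interpretive one at the outset (graph \emph{set} versus graph \emph{map}); once that is pinned down the argument is a routine counterexample, with the sole nonroutine ingredient being the standard bound on the number of atoms of a countably generated $\sigma$-algebra.
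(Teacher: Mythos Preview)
You have misread the statement. In this paper $\Gamma_f$ denotes the graph \emph{function} $X\to X\otimes Y$, $x\mapsto(x,f(x))$, and the codomain carries the \emph{tensor} $\sigma$-algebra (the final $\sigma$-algebra for the \emph{constant} graph maps $\Gamma_y\colon x\mapsto(x,y)$ and $\Gamma_x\colon y\mapsto(x,y)$), which strictly contains the ordinary product $\sigma$-algebra. Your opening argument---``the graph map is always measurable into the product, so the lemma must be about the graph set''---is correct for the product $\sigma$-algebra but fails for the tensor one: measurability into a \emph{finer} $\sigma$-algebra is a stronger condition, and that is precisely what can break. The lemma is asserting that $\Gamma_f\colon X\to X\otimes Y$ need not be a measurable function.

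With that reading, both of your counterexamples collapse, because in each you take $\sa_X$ to be discrete. Any function out of a discrete measurable space is measurable, so $\Gamma_f$ is automatically measurable into $X\otimes Y$ regardless of what the tensor $\sigma$-algebra looks like. Showing that the graph \emph{set} lies outside $\sa_{X\otimes Y}$ (as you do) is a different statement and does not imply that the graph \emph{function} fails to be measurable.

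The paper's own example takes $X=Y=\mathbb{R}$ with the countable/cocountable $\sigma$-algebra and $f=id$, so that $\Gamma_f=\Delta_{\mathbb{R}}$. One picks $R_0\subset\mathbb{R}$ with both $R_0$ and $R_0^c$ uncountable, sets $A_0=\{(r,r):r\in R_0\}$, and checks that every constant-graph preimage $\Gamma_x^{-1}(A_0)$ is either $\{x\}$ or $\emptyset$, hence countable, hence measurable; thus $A_0\in\sa_{\mathbb{R}\otimes\mathbb{R}}$. But $\Delta_{\mathbb{R}}^{-1}(A_0)=R_0\notin\sa_{\mathbb{R}}$, so the diagonal map is not measurable into the tensor product. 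The essential feature you need in the domain is a $\sigma$-algebra that is \emph{not} the full power set.
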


This observation and following example that the graph function of a measurable function need not be measurable using the tensor $\sigma$-algebra are due to Hongseok Yang.

 \begin{example}
Consider $X = Y = \mathbb{R}$ and let the $\sigma$-algebra on $\mathbb{R}$ consist of those subsets $A \subset \mathbb{R}$ which are either countable or cocountable (the complement $A^c$ is countable)  Then the diagonal map $\Delta_{\mathbb{R}}: \mathbb{R} \rightarrow \mathbb{R} \otimes \mathbb{R}$ is not measurable.  To see this,  choose a subset $R_0\subset \mathbb{R}$  such that both $R_0$ and $R_0^c$ are uncountable. Let
\mbox{$A_0 = \{ (r,r) \in \mathbb{R} \times  \mathbb{R} \, | \,  r \in R_0 \}$}. Then $A_0$ is a measurable set of $\mathbb{R} \otimes \mathbb{R}$ because for any 
 $x \in \mathbb{R}$ it follows that the preimage of the constant graph function
 \be \nonumber
\Gamma_x^{-1}(A_0) = \left\{ \begin{array}{cl} \{x\} & \textrm{ iff } x \in R_0 \\ \emptyset & \textrm{ otherwise } \end{array} \right.
\ee
which is at most countable and hence a measurable set.
However $\Delta_{\mathbb{R}}^{-1}(A_0) = R_0 \not \in \sa_{\mathbb{R}}$, i.e., is not a measurable set.  Consequently the graph of the identity map on $\mathbb{R}$, $\Delta_{\mathbb{R}} = \Gamma_{id_{\mathbb{R}}}$, is not measurable.  
\end{example}

On the otherhand, there are also aspects associated with the tensor product monoidal structure which the product monoidal structure does not enjoy (generally because the evaluation maps need not be measurable using the product monoidal structure).

Define the map $\ge$
\be \nonumber
 \begin{tikzpicture}[baseline=(current bounding box.center)]
      \node      (II)    at   (0,0)   {$I \otimes I$};
     \node     (I)    at   (3,0)    {$I$};
     \node      (u)   at   (0,-.8)  {$(u,v)$};
     \node      (duv) at  (3,-.8)  {$\left\{ \begin{array}{ll} 1 & \textrm{ iff }v \le u \\ 0 & \textrm{ otherwise } \end{array} \right.$};
     
     \draw[->,above](II) to node {$\ge$} (I);
     \draw[|->] (u) to node {} (duv);
 \end{tikzpicture}.
\ee
which has the adjunct  denoted $\E$
\be \nonumber
 \begin{tikzpicture}[baseline=(current bounding box.center)]
      \node      (III)    at   (0,0)   {$I^I \otimes I$};
     \node     (I)    at   (3,0)    {$I$};
     \node     (II)     at   (0,-2)   {$I \otimes I$};
     \node     (I2)         at    (-3,0)   {$I^I$};
     \node     (I3)    at   (-3,-2)   {$I$};
     
     \node      (duv)   at   (5,0)  {$(\chi_{[0,u]},v)$};
     \node     (uv)  at   (5,-2) {$(u,v)$};
     \node      (one) at  (8,0)  {$\chi_{[0,u]}(v)$};
    
     \draw[->,above](III) to node {$ev_I$} (I);
     \draw[->,below] (II) to node [xshift=3pt]{$\ge$} (I);
     \draw[->,left] (II)  to node {$\E \otimes id_I$} (III);
     \draw[->,left] (I3) to node {$\E$} (I2);
     
     \draw[|->] (uv) to node {} (duv);
     \draw[|->] (uv) to node {} (one);
     \draw[|->] (duv) to node {} (one);

 \end{tikzpicture}
\ee
where $\E(u) = \chi_{[0,u]}$.  As the characteristic functions are measurable on $I$ this is a measurable function.
This map $\E$ possesses many of the familiar properties of an effect algebra.
 
 This map $\E$ is a section of the Lebesque measure on the unit interval. 
  
 \begin{lemma} \label{id} The Lebesque measure $\leb \in \T(I)$ is the unique probability measure on $I$ making the diagram 
 \be \nonumber
 \begin{tikzpicture}[baseline=(current bounding box.center)]
      \node      (I)    at   (0,0)   {$I$};
     \node     (I2)    at   (4,0)    {$I$};
     \node      (II)   at   (2,1.5)  {$I^I$};
     
     \draw[->,below](I) to node {$id_I$} (I2);
     \draw[->,above] (I) to node [xshift=-5pt]{$\E$} (II);
     \draw[->,above] (II) to node [xshift=2pt]{$\leb$} (I2);
 \end{tikzpicture}
\ee
 commute.
 \end{lemma}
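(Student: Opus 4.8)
The plan is to unwind what the composite $I \xrightarrow{\E} I^I \xrightarrow{\leb} I$ computes, and then reduce the uniqueness claim to the $\pi$--$\lambda$ theorem. Recall that in the framework of this paper a probability measure $P \in \T(I)$ is identified with the weakly averaging affine functional $I^I \to I$ given by integration, $\ulcorner f \urcorner \mapsto \int_I f \, dP$; this functional is a measurable map since, for a fixed $P$, it factors through the evaluation map via a constant graph function, which the tensor $\sigma$-algebra on $\M$ renders measurable. Under this identification the arrow labelled $\leb$ (and, more generally, any $P$) sends $\ulcorner \chi_{[0,u]} \urcorner \mapsto \int_I \chi_{[0,u]}\, dP = P([0,u])$. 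Since $\E(u) = \chi_{[0,u]}$, the composite $P \circ \E$ is precisely the map $u \mapsto P([0,u])$, so the triangle commutes for $P$ if and only if $P([0,u]) = u$ for every $u \in [0,1]$.

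For existence, I would simply note that the Lebesgue measure satisfies $\leb([0,u]) = u$ by the definition of length, so $\leb \circ \E = id_I$; the only auxiliary fact needed is that $\E$ is a measurable map, and this is already recorded in the paragraph preceding the statement, since $\E$ is the adjunct of the measurable map $\ge : I \otimes I \to I$.

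For uniqueness, suppose $Q \in \T(I)$ also makes the triangle commute, so that $Q([0,u]) = u = \leb([0,u])$ for all $u$. The family $\{[0,u] : u \in [0,1]\}$ is closed under finite intersections, hence is a $\pi$-system, and it generates the Borel $\sigma$-algebra of $I$. Two probability measures that agree on a generating $\pi$-system agree on the entire $\sigma$-algebra, so $Q = \leb$.

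The step I would be most careful about is the first one --- correctly identifying the arrow $\leb : I^I \to I$ in the diagram with integration against Lebesgue measure, and hence the evaluation $(\leb \circ \E)(u) = \leb([0,u])$. Once that bookkeeping is settled there is no genuine obstacle: existence is just the defining property of Lebesgue measure on initial segments, and uniqueness is the classical uniqueness-of-measures argument applied to the $\pi$-system of initial segments $[0,u]$.
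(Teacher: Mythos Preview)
Your proposal is correct and is essentially the same argument as the paper's: the paper's one-line proof simply records that $\{[0,r]\}_{r\in I}$ is a $\pi$-system generating the Borel $\sigma$-algebra, leaving the unwinding of $\leb\circ\E$ and the invocation of the uniqueness-of-measures theorem implicit. Your version spells out those steps explicitly, but the route is identical.
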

 \begin{proof} The set $\{[0,r]\}_{r \in I}$ forms a $\pi$-system and generates the Borel $\sigma$-algebra on $I$. \end{proof}

Composition on the left of the composite $\leb \circ \E$  by a probability measure $P \in \T(X)$ gives the trivial observation that

\be \nonumber
 \begin{tikzpicture}[baseline=(current bounding box.center)]
 
        \node      (dI)    at   (-2,0)   {$I^I$};
     \node     (dII)    at   (-2,-1.5)    {$I$};
     \node     (dI2)     at   (-2,-3)   {$I^X$};
     
     \draw[->,left](dII) to node {$\E$} (dI);
     \draw[->,left] (dI2) to node {$P$} (dII);

      \node      (II)    at   (0,0)   {$I^I$};
     \node     (I)    at   (2.5,0)    {$I$};
     \node     (III)     at   (0,-1.5)   {$I$};
     \node     (II2)    at  (0,-3)   {$I^X$};
     \node    (c)   at   (5,-1.5)   {$\forall f \in I^X \quad  \leb( \E_{P(f)}) = P(f)$};
     
     \draw[->,above](II) to node {$\leb$} (I);
     \draw[->,left] (III) to node {$\E$} (II);
     \draw[->,left] (II2) to node {$P$} (III);
     \draw[->,right] (II2) to node {$P$} (I);
     \draw[->,left] (III) to node [yshift=4pt]{$id_I$} (I);
 
 \end{tikzpicture}
\ee
\noindent
which shows that every probability measure on any measurable space $X$ can be viewed in terms of the Lebesque probability measure on $\mbfI$ by pushing a measurable function $X \stackrel{f}{\longrightarrow} \mbfI$ forward to the characteristic function $\chi_{[0, P(f)]}$.

\newpage

\bibliographystyle{plain}

 \end{document}